\documentclass[11pt]{amsart}
\usepackage[margin=1in]{geometry}

\usepackage{graphicx, physics, mathrsfs}
\usepackage[colorlinks=true, citecolor=blue]{hyperref}

\newcommand{\subeq}{\subseteq}

\newcommand{\ol}{\overline}

\newcommand{\bigp}[1]{\left( #1 \right)} 
\newcommand{\bigb}[1]{\left[ #1 \right]} 
\newcommand{\bigc}[1]{\left\{ #1 \right\}} 

\newcommand{\floor}[1]{\left\lfloor #1 \right\rfloor}

\newcommand{\NN}{\mathbb N}
\newcommand{\ZZ}{\mathbb Z}
\newcommand{\cS}{\mathcal S}

\newtheorem{thm}{Theorem}[section]
\newtheorem{theorem}[thm]{Theorem}
\newtheorem{corollary}[thm]{Corollary}
\newtheorem{lemma}[thm]{Lemma}
\newtheorem{proposition}[thm]{Proposition}
\newtheorem{definition}[thm]{Definition}
\newtheorem{problem}[thm]{Problem}
\newtheorem{conjecture}[thm]{Conjecture}
\newtheorem*{claim}{Claim}
\newtheorem{fact}[thm]{Fact}
\newtheorem{remark}[thm]{Remark}
\newtheorem{example}[thm]{Example}

\title{Zero-sum subsequences in bounded-sum $\{-r,s\}$-sequences}
\author{Alec Sun}
\address[Alec Sun]{Department of Mathematics, Harvard University}
\email{sundogx@gmail.com}

\begin{document}

\begin{abstract}
    We study the problem of finding zero-sum blocks in bounded-sum sequences, which was introduced by Caro, Hansberg, and Montejano. Caro et al.\! determine the minimum $\{-1,1\}$-sequence length for when there exist $k$ consecutive terms that sum to zero. We determine the corresponding minimum sequence length when the set $\{-1,1\}$ is replaced by $\{-r,s\}$ for arbitrary positive integers $r$ and $s.$ This confirms a conjecture of theirs. We also construct $\{-1,1\}$-sequences of length quadratic in $k$ that avoid $k$ terms indexed by an arithmetic progression that sum to zero. This solves a second conjecture of theirs in the case of $\{-1,1\}$-sequences on zero-sum arithmetic subsequences. Finally, we give for sufficiently large $k$ a superlinear lower bound on the minimum sequence length to find a zero-sum arithmetic progression for general $\{-r,s\}$-sequences. 
\end{abstract}

\maketitle

\section{Introduction}\label{introduction}

The topics of this paper concern results antipodal to those on sequences in discrepancy theory \cite{C}, the study of deviations from uniformity in combinatorial settings. There is a famous theorem in discrepancy theory due to Roth \cite{R} on subsequences indexed by arithmetic progressions\footnote{Throughout this paper, an arithmetic progression in $[n]$ with $k$ terms and common difference $d$ will denote a set of numbers $\{a,a+d,\ldots,a+(k-1)d\}$ each of which is in $[n],$ and an arithmetic progression $A\subeq [n]$ can have any number of terms $k$ and any common difference $d.$} in $\{-1,1\}$-sequences.

\begin{theorem}[Roth]\label{roth}
Let $[n]$ denote the set $\{1,2,\ldots,n\}.$ For any positive integer $n$ and any function $f:[n]\to \{-1,1\}$ there exists an arithmetic progression $A\subeq [n]$ such that $$\abs{\sum_{x\in A} f(x)} \ge cn^{1/4}$$ for some positive constant $c.$
\end{theorem}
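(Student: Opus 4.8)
The plan is to use the $L^2$ (Fourier) method, which quantifies the intuition that a $\{-1,1\}$-coloring cannot be simultaneously ``flat'' along the residue classes modulo every small common difference. Extend $f$ to all of $\ZZ$ by setting it to $0$ outside $[n]$, write $e(t)=e^{2\pi i t}$, and set $\hat f(\theta)=\sum_{x\in\ZZ}f(x)\,e(x\theta)$ for $\theta\in[0,1)$. Since $f$ takes values in $\{-1,1\}$ on $[n]$, Parseval gives $\int_0^1\abs{\hat f(\theta)}^2\,\D\theta=n$; this energy is the quantity I want to capture using arithmetic progressions.

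For a common difference $d$ and length $m$, let $S_d(a,m)=\sum_{j=0}^{m-1}f(a+jd)$ be the sum over the progression $\{a,a+d,\dots,a+(m-1)d\}$. A direct computation with the geometric sum $F_{d,m}(\theta)=\sum_{j=0}^{m-1}e(jd\theta)$ shows that $S_d(a,m)$ is the $a$-th Fourier coefficient of $\hat f(\theta)\,\overline{F_{d,m}(\theta)}$, so Parseval in the variable $a$ yields the key identity
\[
    \sum_{a\in\ZZ}\abs{S_d(a,m)}^2=\int_0^1\abs{\hat f(\theta)}^2\,\frac{\sin^2(\pi m d\theta)}{\sin^2(\pi d\theta)}\,\D\theta .
\]
I would then fix the scales $d\le\sqrt n$ and $m\approx\sqrt n$ and sum this identity over $d$. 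The \emph{upper} bound is the easy direction: the nonzero part of each $S_d(a,m)$ runs over an arithmetic progression contained in $[n]$, so $\abs{S_d(a,m)}\le D:=\max_{A\subeq[n]}\abs{\sum_{x\in A}f(x)}$, and for each of the $\approx\sqrt n$ choices of $d$ only $O(n)$ values of $a$ contribute (as $md\lesssim n$); hence the left-hand side, summed over $d\le\sqrt n$, is $O(n^{3/2}D^2)$.

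The heart of the argument, and the step I expect to be the main obstacle, is the matching \emph{lower} bound: one must show that the Fej\'er-type kernel $K(\theta)=\sum_{d\le\sqrt n}\sin^2(\pi md\theta)/\sin^2(\pi d\theta)$, which satisfies $\int_0^1 K=n$, is $\abs{\hat f}^2$-weighted at least on the order of its uniform average, so that $\sum_{d\le\sqrt n}\sum_a\abs{S_d(a,m)}^2\gtrsim n^2$. Comparing with the upper bound then forces $D^2\gtrsim n^{1/2}$, i.e.\ $D\gtrsim n^{1/4}$, and the choice $d,m\approx\sqrt n$ is exactly what balances the two estimates at the exponent $1/4$. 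The difficulty is that $K$ is \emph{not} bounded below pointwise: it spikes near rationals $p/q$ with small denominator $q\le\sqrt n$, so most of its mass sits on a small set, and a priori $\abs{\hat f}^2$ could concentrate away from those spikes. Overcoming this requires a major/minor-arc analysis of $\hat f$ — bounding the contribution of frequencies near small-denominator rationals (where $\hat f(0)=\sum_x f(x)$ and its residue-class analogues are themselves progression-sums controlled by $D$) and showing that the remaining energy genuinely interacts with the kernel. This is precisely the place where the rigidity ``$f$ cannot be flat modulo all $d\le\sqrt n$ at once'' must be made quantitative.
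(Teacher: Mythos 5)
The paper never proves this statement: it is quoted as background (Roth's theorem, reference \cite{R}) and used only as motivation, so there is no internal proof to compare against. Judged on its own, your proposal correctly sets up the standard $L^2$ machinery --- the Parseval identity $\sum_a\abs{S_d(a,m)}^2=\int_0^1\abs{\hat f(\theta)}^2\abs{F_{d,m}(\theta)}^2\,\D\theta$ and the upper bound $O(n^{3/2}D^2)$ after summing over $d\le\sqrt n$ --- but it stops at the lower bound on $\int\abs{\hat f}^2K$, which you flag as ``the main obstacle'' requiring a major/minor-arc analysis. As written this is a genuine gap: without that lower bound the inequality chain never closes and nothing is proved.

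The good news is that the obstacle you describe is not actually there, and the missing step is a one-line pigeonhole rather than a circle-method decomposition. The kernel $K(\theta)=\sum_{d\le\sqrt n}\sin^2(\pi md\theta)/\sin^2(\pi d\theta)$ \emph{is} bounded below pointwise by $\gg n$ once $m$ is a small constant times $\sqrt n$: by Dirichlet's approximation theorem, for every $\theta$ there is some $d\le\sqrt n$ with $\norm{d\theta}\le 1/\sqrt n$, where $\norm{x}$ denotes the distance from $x$ to the nearest integer. Then $\norm{jd\theta}\le j\norm{d\theta}\le 1/6$ for all $0\le j<m$ when $m\le\sqrt n/6$, so every summand of $F_{d,m}(\theta)$ has real part at least $\cos(\pi/3)=1/2$, and that single value of $d$ already contributes $\abs{F_{d,m}(\theta)}^2\ge m^2/4\gg n$ to $K(\theta)$. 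Your worry that $\abs{\hat f}^2$ might concentrate away from the spikes of $K$ is therefore moot: the spikes near small-denominator rationals are where $K$ reaches order $n^{3/2}$, but its \emph{minimum} over all $\theta$ is still of order $n$, which is all the argument needs. Inserting $K(\theta)\ge cn$ into Parseval gives $\int_0^1\abs{\hat f}^2K\ge cn\cdot n=cn^2$, and comparison with your upper bound $O(n^{3/2}D^2)$ yields $D\gg n^{1/4}$. With that one observation your outline becomes a complete and correct proof; no analysis of $\hat f$ near rationals is required.
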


Matousek and Spencer \cite{MS} showed that the bound in Theorem \ref{roth} is sharp up to a constant factor. Another result in discrepancy theory regarding arithmetic subsequences\footnote{Here an \emph{arithmetic subsequence} refers to $k$ terms of a sequence corresponding to indices of a $k$-term arithmetic progression.} of $\{-1,1\}$-sequences is the proof of the Erd\H{o}s Discrepancy Conjecture, which states that for any sequence $f:\NN\to \{-1,1\}$ the discrepancy $$\sup_{n,d\in \NN} \abs{\sum_{j=1}^n f(jd)}$$ is unbounded, by Tao \cite{T}.

Caro, Hansberg, and Montejano \cite{CHM} consider the somewhat opposite direction. Instead of trying to maximize the quantity $$\abs{\sum_{x\in A} f(x)}$$ for subsequences $A\subeq [n]$ as one does in discrepancy theory, they attempt to minimize it. They introduce the following definitions:

\begin{definition}
Let $X$ denote any set. Given an integer function $f:X\to \ZZ$ and any subset $Y\subeq X,$ define $$f(Y) = \sum_{y\in Y} f(y).$$ We sometimes refer to $f(Y)$ as the \emph{weight} of $Y$ with respect to $f.$
\end{definition}

\begin{definition}
We say that $Y$ is a \emph{zero-sum set} with respect to $f$ if $Y$ has weight 0, namely $f(Y) = 0.$
\end{definition}

\begin{definition}
A \emph{$k$-block} is a set of $k$ consecutive integers on a given sequence of integers. A zero-sum $k$-block is a $k$-block that is also a zero-sum set.
\end{definition}

In general, a \emph{zero-sum problem} studies conditions needed to ensure that a given sequence has a zero-sum subsequence. One of the first theorems in this subject is the Erd\H{o}s-Ginzburg-Ziv Theorem \cite{EGZ}, which says that given a sequence of $2n-1$ elements of $\ZZ_n,$ there exists a subsequence of length $n$ that has zero weight. Furthermore, the number $2n-1$ is the smallest integer with this property.

The class of zero-sum problems has been extensively studied for abelian groups. Overviews of zero-sum problems have been written by Caro \cite{C2} as well as Gao and Geroldinger \cite{GG}. In the context of zero-sum subsequences over the integers, the results in this paper are related to results in, for example, \cite{AMSSV}, \cite{B}, \cite{CY}, and \cite{SST}.

A principal result of Caro et al.\! in \cite{CHM} deals with small-sum $k$-blocks in bounded-sum $\{-1,1\}$-sequences.

\begin{theorem}[\cite{CHM}, Theorem 2.3]\label{+-1-theorem}
Let $t$, $k$, and $q$ be integers such that $q\ge 0$, $0\le t<k$, and $t\equiv k\bmod 2.$ Let $s\in [0,t+1]$ be the unique integer satisfying $s\equiv q+\frac{k-t-2}{2} \bmod{t+2}.$ Then for any integer $n$ such that $$n\ge \max\bigc{ k, \frac{1}{2(t+2)}k^2 + \frac{q-s}{t+2}k - \frac{t}{2} + s}$$ and any function $f:[n]\to \{-1,1\}$ with $$\abs{\sum_{i=1}^n f(i)}\le q,$$ there is a $k$-block $B\subeq [n]$ with $$\abs{\sum_{y\in B}f(y)} \le t.$$
\end{theorem}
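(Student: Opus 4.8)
The plan is to pass to partial sums and reduce to a one-sided extremal count. Set $S_0 = 0$ and $S_j = \sum_{i=1}^j f(i)$, so that the block $B_j = \{j+1,\dots,j+k\}$ has weight $g_j := \sum_{y\in B_j} f(y) = S_{j+k}-S_j$ for $j=0,1,\dots,n-k$. The goal is to produce some $j$ with $\abs{g_j}\le t$. Note that each $g_j\equiv k\equiv t\pmod 2$, and that $g_{j+1}-g_j = f(j+k+1)-f(j+1)\in\{-2,0,2\}$, so consecutive block-weights differ by at most $2$ and all lie in the parity class of $t$.

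First I would make the reduction. Suppose toward a contradiction that $\abs{g_j}>t$ for every $j$. The integers of the correct parity inside $[-t,t]$ are exactly $-t,-t+2,\dots,t$, so a sequence changing by steps in $\{-2,0,2\}$ cannot pass from a value $\ge t+2$ to a value $\le -(t+2)$ without landing in $[-t,t]$; hence all the $g_j$ share a sign and $\abs{g_j}\ge t+2$ for all $j$. Replacing $f$ by $-f$ (which preserves the range $\{-1,1\}$, the hypothesis $\abs{S_n}\le q$, and the event we seek, since $\abs{g_j}$ is unchanged), I may assume $g_j\ge t+2$ for all $j$.

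Now I would translate this into a packing statement. Writing $d_j$ for the number of indices in $B_j$ with $f=-1$, the assumption $g_j = k-2d_j\ge t+2$ says that every window of $k$ consecutive indices contains at most $D:=(k-t-2)/2$ minus-ones. On the other hand, if $d$ is the total number of minus-ones in $[n]$, then $S_n = n-2d$, and $\abs{S_n}\le q$ forces $d\ge (n-q)/2$. The crux is therefore a clean extremal bound, which I would isolate as a lemma: the maximum number of marked positions in $[n]$ with at most $D$ marks in every window of $k$ consecutive positions is $\floor{n/k}\,D + \min(D,\,n\bmod k)$. The upper bound comes from partitioning $[n]$ into blocks of length $k$ (the final short block being capped by the last full window $\{n-k+1,\dots,n\}$), and the matching construction places the marks at the front of each block. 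Combining the two counts, a counterexample forces $(n-q)/2 \le \floor{n/k}\,D + \min(D,\,n\bmod k)$.

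The main obstacle is extracting the \emph{exact} threshold from this inequality. Writing $n = mk+\rho$ with $0\le\rho<k$ and substituting $D=(k-t-2)/2$, the inequality becomes, after clearing the factor of $2$, the linear constraint $m(t+2)\le 2\min(D,\rho)-\rho+q$; optimizing the resulting bound on $n=mk+\rho$ over $\rho$ shows the worst case is $\rho=D$, which recovers the real-analytic optimum $\tfrac{1}{2(t+2)}k^2+\tfrac{q}{t+2}k-\tfrac t2$. The delicate part, and where the auxiliary integer $s$ enters, is the rounding: $m$ must be an integer and $S_n=n-2d$ must have the parity of $n$ subject to $\abs{S_n}\le q$, so the genuinely available slack is governed by the residue of $q+\tfrac{k-t-2}{2}$ modulo $t+2$, which is exactly the quantity defining $s$. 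Tracking these congruences converts the continuous optimum into the stated floor-free formula, replacing $\tfrac{q}{t+2}k$ by $\tfrac{q-s}{t+2}k+s$, and shows that any $n$ at least the claimed bound makes the packing inequality fail, contradicting the assumption and hence producing a block with $\abs{g_j}\le t$. The parity/intermediate-value reduction and the packing lemma are routine; essentially all the difficulty lies in this final modular bookkeeping, together with checking the degenerate regime in which the $\max$ with $k$ is the binding constraint.
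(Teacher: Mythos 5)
The paper itself does not prove this statement --- it is imported from \cite{CHM} --- so the closest in-house comparison is the proof of Theorem \ref{rs-theorem}, which proceeds exactly as you do: an interpolation/parity reduction to the one-sided case, a decomposition of $[n]$ into $m$ full $k$-blocks plus a remainder $R$, and a counting contradiction anchored at the last window $B^*$. Your reduction (all block weights $g_j$ of one sign with $\abs{g_j}\ge t+2$, WLOG positive), your packing bound $d\le \floor{n/k}D+\min(D,\,n\bmod k)$ with $D=\frac{k-t-2}{2}$, and the resulting constraint $m(t+2)\le 2\min(D,\rho)-\rho+q$ for $n=mk+\rho$ are all correct, and this is the standard route.

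The problem sits in the one step you yourself flag as carrying all the difficulty, and the concrete claim you make about it is wrong: the integer optimization of $mk+\rho$ subject to $m(t+2)\le 2\min(D,\rho)-\rho+q$ is \emph{not} worst at $\rho=D$. For a fixed integer $m$ with $c:=m(t+2)-q\le D$, the feasible remainders are exactly those with $\rho\le 2D-c$, so the optimum is $\rho=2D-c>D$; at the largest admissible $m$, namely $m^*=\frac{q+D-s}{t+2}=\floor{\frac{q+D}{t+2}}$, one has $c=D-s$ and hence $\rho=D+s$, and $n=m^*k+D+s$ evaluates to exactly the stated threshold minus $1$. If you instead stop at $\rho=D$ you get $m^*k+D$, which is smaller by $s$, i.e.\ you would ``prove'' a bound stronger than the theorem by $s$ --- and that stronger statement is false: for $t=q=0$, $k=8$ (so $D=3$, $s=1$, threshold $13$) your route caps counterexamples at $n=11$, yet the length-$12$ sequence $-,-,-,+,+,+,+,+,-,-,-,+$ is zero-sum and every $8$-block has weight $2$. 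So the $+s$ in the theorem does not come from rounding $m$ down (that only produces the $\frac{q-s}{t+2}k$ term); it comes from the remainder being allowed to grow to $D+s$ once $m$ is integral. A side remark: the parity of $S_n$ that you invoke plays no role in this direction (it matters only for the matching extremal constructions), so the modular bookkeeping is entirely the floor in $m^*$ plus this $\rho=D+s$ slack. With that correction the argument closes, including the degenerate regime where the $\max$ with $k$ binds.
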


Setting $t=0$ in Theorem \ref{+-1-theorem} yields the following corollary regarding zero-sum $k$-blocks in bounded-sum $\{-1,1\}$-sequences:

\begin{corollary}[\cite{CHM}, Corollary 2.4]\label{+-1-corollary}
Let $k\ge 2$ be even and let $q\ge 0$ be an integer. Take $s\in \{0,1\}$ as the unique integer satisfying $s\equiv q + \frac{k-2}{2}\bmod 2.$ Then for any integer $n$ such that $$n\ge \max\bigc{k,\frac{k^2}{4} + \frac{q-s}{2}k+s}$$ and any function $f:[n]\to \{-1,1\}$ with $\abs{f([n])}\le q,$ there is a zero-sum $k$-block in $[n].$
\end{corollary}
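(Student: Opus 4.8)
The plan is to pass to partial sums and reduce the statement to an extremal bound on a single $\pm1$ walk. Given $f:[n]\to\{-1,1\}$, set $S_0=0$ and $S_i=\sum_{j=1}^i f(j)$, so that $S_{i+1}-S_i=f(i+1)\in\{-1,1\}$, $\abs{S_n}=\abs{f([n])}\le q$, and the $k$-block $\{i+1,\ldots,i+k\}$ has weight $w_i:=S_{i+k}-S_i$. Thus a zero-sum $k$-block exists if and only if $S_{i+k}=S_i$ for some $0\le i\le n-k$, and I assume for contradiction that no such $i$ exists. First I would run a discrete intermediate value argument. Because $k$ is even, each $w_i$ is even, and $w_i-w_{i-1}=f(i+k)-f(i)\in\{-2,0,2\}$; hence consecutive weights cannot pass from a positive even value to a negative one without taking the value $0$. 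Since no $w_i$ equals $0$, all the $w_i$ share a single sign, and replacing $f$ by $-f$ (which preserves $\abs{f([n])}\le q$ and the set of zero-sum blocks) I may assume $w_i\ge 2$ for every $i$. Equivalently, along each residue class $r\in\{0,\ldots,k-1\}$ the subsequence $S_r<S_{r+k}<S_{r+2k}<\cdots$ is strictly increasing, with consecutive terms differing by an even amount at least $2$ and all terms of a common parity.

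The core is then the extremal claim: such a walk cannot have $n\ge \tfrac{k^2}{4}+\tfrac{q-s}{2}k+s$. I would first record the structural facts driving the count. If the global minimum were attained at an index $p\ge k$, then $w_{p-k}\ge2$ forces $S_{p-k}<S_p$, contradicting minimality; so the minimum lies in the first block $\{0,\ldots,k-1\}$, and symmetrically the maximum lies in the last block $\{n-k+1,\ldots,n\}$. Writing $\mu=\min_i S_i$ and $M=\max_i S_i$, the walk stays in $[\mu,M]$; since the minimum is within $k-1$ steps of $S_0=0$ and the maximum within $k-1$ steps of $S_n$, one gets $\mu\ge-(k-1)$ and $M\le q+k-1$. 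Now decompose the indices into the $k$ residue chains: chain $r$ contributes $m_r+1$ strictly increasing values of a fixed parity inside $[\mu,M]$, its bottom $S_r$ is one of the first-block values, its top $S_{r+m_rk}$ is one of the last-block values, and $\sum_r(m_r+1)=n+1$ while $\sum_r\bigl(S_{r+m_rk}-S_r\bigr)\ge 2\sum_r m_r=2(n+1-k)$.

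The main obstacle is turning this into the \emph{sharp} constant $k^2/4$ together with the parity correction $s$. A naive estimate — bounding each chain length by the number of admissible same-parity heights, or bounding $\sum_{i=n-k+1}^n S_i$ and $\sum_{i=0}^{k-1}S_i$ crudely in the telescoped inequality — yields only $n\lesssim k^2/2$, off by a factor of two. The extra factor comes from the observation that the chain \emph{tops} $S_{r+m_rk}$ are exactly the $k$ values $S_{n-k+1},\ldots,S_n$, which themselves form a $\pm1$ walk on $k$ consecutive indices (and dually for the bottoms); consequently the chains cannot all reach the top of the band, since a $\pm1$ walk over $k$ consecutive positions attains any fixed height at most about $k/2$ times. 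The sharp bound must therefore emerge from simultaneously balancing the chain lengths $m_r$ against the requirement that their tops and bottoms each fit into a length-$k$ $\pm1$ walk with controlled endpoints; this coupling, which is precisely the content of Theorem~\ref{+-1-theorem}, is what I expect to be the delicate step. The even-parity of every $w_i$ and the monochromatic parity of each chain then make the exact value count depend on $q\bmod 2$ and $(k/2)\bmod 2$, and carrying this bookkeeping is what forces $s\equiv q+\tfrac{k-2}{2}\pmod 2$ to enter and sharpens the estimate by the lower-order terms $\tfrac{q-s}{2}k+s$.

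Finally I would certify that the threshold is tight by exhibiting the extremal ``sawtooth'' walk in which every window has weight exactly $2$: fixing the first block as a maximal descent followed by a maximal ascent (ending at a height of parity matching $S_0$) and extending periodically by $S_i=S_{i\bmod k}+2\lfloor i/k\rfloor$ produces a genuine $\{-1,1\}$-sequence with $S_{i+k}-S_i=2>0$ for all $i$, hence no zero-sum $k$-block, with $\abs{S_n}\le q$ and length $\tfrac{k^2}{4}+\tfrac{q-s}{2}k+s-1$. This matches the bound one place below the stated threshold, confirming both that the quadratic-plus-parity estimate above is the correct one and that the hypothesis $n\ge\max\{k,\tfrac{k^2}{4}+\tfrac{q-s}{2}k+s\}$ is exactly what is needed to force a zero-sum $k$-block.
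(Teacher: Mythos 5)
Your reduction is sound as far as it goes: the passage to partial sums, the discrete intermediate-value argument showing all window weights $w_i$ share a sign (this is exactly Lemma~\ref{interpolation} with $t=0$), the negation to assume $w_i\ge 2$, and the localization of the minimum and maximum of the walk to the first and last blocks are all correct. But the proof stops at the point where the actual theorem begins. Your telescoping over the $k$ residue chains yields, as you yourself compute, only $n\lesssim k^2/2$, and the passage to the sharp constant $\tfrac14$ together with the terms $\tfrac{q-s}{2}k+s$ is not carried out: you write that this ``coupling \ldots is precisely the content of Theorem~\ref{+-1-theorem}'' and ``is what I expect to be the delicate step.'' That delicate step \emph{is} the statement to be proved, so deferring it leaves a genuine gap rather than a proof. (The closing tightness construction is also not needed for the corollary, which only asserts sufficiency of the threshold; it corresponds to Theorem~\ref{+-1-extremal-theorem}.)

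For comparison: in this paper the corollary is obtained simply by setting $t=0$ in Theorem~\ref{+-1-theorem}, which is quoted from \cite{CHM}. If you want a self-contained argument that actually closes your gap, the proof of Theorem~\ref{rs-theorem} in Section~\ref{zero-sum-blocks} shows the efficient route (specialize $r=s=1$): split $[n]$ into $m=\floor{n/k}$ disjoint $k$-blocks and a remainder $R$; the hypothesis $\abs{f([n])}\le q$ together with $f(B_i)\ge 2$ forces $f(R)\le q-2m$, while the rightmost $k$-block $B^*=B'\cup R$ having $f(B^*)\ge 2$ forces $f(B')\ge 2(m+1)-q$; since $\abs{B'}\ge \abs{f(B')}$ and $\abs{R}\ge\abs{f(R)}$ and $\abs{B'}+\abs{R}=k$, one gets $k\ge 4m+2-2q$, i.e.\ $m\le \tfrac{k+2q-2}{4}$ and hence $n<(m+1)k\le \tfrac{k^2}{4}+\tfrac{q+1}{2}k$. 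That single overlap block $B^*$ is what replaces your ``simultaneous balancing of chain tops and bottoms'' and is where the factor of two is gained; the residue class $s$ then falls out of tracking the exact parities in these inequalities. Without some version of this step, your proposal establishes only the weaker $k^2/2$ bound.
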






Corollary \ref{+-1-corollary} can be extended to cover the range $q=o(n).$ The following is an infinite version of this phenomenon:

\begin{theorem}[\cite{CHM}, Theorem 2.10]\label{infinite-theorem}
Let $f:\ZZ^+\to \{-1,1\}$ be a function such that $\abs{f([n])} = o(n)$ when $n\to \infty.$ Then for every even $k\ge 2$ there are infinitely many zero-sum $k$-blocks.
\end{theorem}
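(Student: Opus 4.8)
The plan is to bootstrap the finite statement of Corollary~\ref{+-1-corollary} into an infinite one by applying it inside windows that march off to infinity. First I would record the (immediate) translation invariance of the corollary: given integers $M\ge 0$ and $L\ge 1$, the function $g:[L]\to\{-1,1\}$ defined by $g(i)=f(M+i)$ satisfies $g([L])=f(\{M+1,\dots,M+L\})$, and every zero-sum $k$-block of $g$ in $[L]$ corresponds to a zero-sum $k$-block of $f$ whose smallest index exceeds $M$. Thus Corollary~\ref{+-1-corollary} applies verbatim with any window $\{M+1,\dots,M+L\}$ in place of $[L]$. The point of sliding the starting index up to $M+1$ is that it forces the block we extract to live far out in the sequence, which is exactly what ``infinitely many'' requires; merely applying the corollary to $[n]$ could always return a block near the beginning.

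Next I would fix the even integer $k\ge 2$ and, for each large $M$, take the window $I_M=\{M+1,\dots,2M\}$ of length $M$. The key estimate is on its weight: since $f(I_M)=f([2M])-f([M])$, the hypothesis $\abs{f([n])}=o(n)$ gives
$$\abs{f(I_M)}\le \abs{f([2M])}+\abs{f([M])}=o(M).$$
Set $q_M=\abs{f(I_M)}$, an integer, and let $s_M\in\{0,1\}$ be the parity value attached to $q_M$ by the corollary. Because $k$ is fixed and $q_M=o(M)$, the threshold
$$\frac{k^2}{4}+\frac{q_M-s_M}{2}\,k+s_M=o(M),$$
so for all sufficiently large $M$ it is at most $M$, and also $M\ge k$ eventually. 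Hence the window $I_M$ (with $L=M$) satisfies the hypotheses of the translated Corollary~\ref{+-1-corollary} and contains a zero-sum $k$-block.

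Finally I would let $M\to\infty$. Each block produced lies inside $I_M$, so its smallest index is greater than $M$; since $M$ is arbitrarily large the resulting zero-sum $k$-blocks have unbounded starting indices, so infinitely many of them are distinct. (Equivalently, pick $M_1<M_2<\cdots$ with $M_{t+1}>2M_t$ so that the windows $I_{M_t}$ are pairwise disjoint.)

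The only genuine work is the quantitative check in the middle step: one must confirm that the window weight grows strictly slower than the window length, i.e.\ that $\tfrac{k}{2}q_M$ stays below $M$. This is precisely where the $o(n)$ hypothesis is needed, since the corollary's threshold is linear in $q$ with slope $k/2$; for fixed $k$ the sublinear growth of $q_M$ eventually defeats that slope. I note that the whole argument can also be made self-contained, bypassing the corollary: the weights $w(a)=\sum_{i=0}^{k-1}f(a+i)$ of consecutive $k$-blocks are even and satisfy $w(a+1)-w(a)=f(a+k)-f(a)\in\{0,\pm 2\}$, so a discrete intermediate value argument shows that if only finitely many blocks were zero-sum then $w$ would keep a fixed sign past some point, forcing $\abs{f([mk])}=\abs{\sum_{j<m}w(jk+1)}\ge 2m-O(1)$, which is linear in $mk$ and contradicts $\abs{f([n])}=o(n)$.
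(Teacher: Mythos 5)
Your proposal is correct. A remark on provenance: this paper does not reprove Theorem~\ref{infinite-theorem} (it is quoted from \cite{CHM}), but the one-line adaptation given in the proof of Theorem~\ref{infinite-rs-theorem} (``if $f(B_i)>0$ for a $k$-block $B_i$ then in fact $f(B_i)\ge r+s\ge 2$'') reveals that the cited proof argues directly on the sequence of block weights --- exactly the ``self-contained'' alternative you sketch at the end: consecutive $k$-block weights are even, change by at most $2$, hence keep a fixed sign past the last zero-sum block, and summing $m$ disjoint blocks forces $\abs{f([n])}\gtrsim \tfrac{2}{k}n$, contradicting $o(n)$. Your primary route is genuinely different: you bootstrap the finite Corollary~\ref{+-1-corollary} by translation into windows $\{M+1,\dots,2M\}$, using $\abs{f(I_M)}\le\abs{f([2M])}+\abs{f([M])}=o(M)$ to verify that the corollary's threshold $\tfrac{k^2}{4}+\tfrac{q_M-s_M}{2}k+s_M$ is eventually below the window length; the quantitative check is right, and the disjoint-window selection cleanly delivers infinitely many distinct blocks. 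What each buys: your windowing argument recycles the sharp finite bound with no new combinatorial work and makes transparent exactly where sublinearity beats the slope $k/2$; the direct block-weight argument is shorter, needs only the parity/interpolation observation rather than the full strength of Corollary~\ref{+-1-corollary}, and is the version that transfers verbatim to $\{-r,s\}$-sequences as in Theorem~\ref{infinite-rs-theorem}. Either is a complete proof.
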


Caro et al.\! note that there are applications of Theorem \ref{infinite-theorem} to two well-known number theoretic functions, Liouville's function and the Legendre symbol relating to quadratic residues and non-residues. The first application of Theorem \ref{infinite-theorem} in \cite{CHM} is a result on the Liouville function relating to the work of Hildebrand \cite{H} on sign patterns of this function in short intervals. The second application \cite{CHM} is to zero-sum blocks of consecutive primes when subjected to the Legendre symbol.

There are many possible natural generalizations of Corollary \ref{+-1-corollary} about zero sum $k$-blocks when the range of $f$ is replaced by another range, such as intervals $[x,y] = \{x,x+1,\ldots,y\}.$ Caro et al.\! mention that in most cases the existence of precisely $k$-consecutive zero-sum terms is not guaranteed. However, in the case where $f:[n]\to \{-r,s\}$ for arbitrary positive integers $r$ and $s,$ a result similar to Corollary \ref{+-1-corollary} can be deduced. Caro et al.\! have the following conjecture:

\begin{conjecture}[\cite{CHM}, Conjecture 5.4]\label{rs-conjecture}
Let $r$, $s$, and $k$ be positive integers such that $r+s$ divides $k.$ Then there exists a constant $c(r,s)$ such that if $$n\ge \frac{rs}{(r+s)^2}k^2 + c(r,s)k$$ then any function $f:[n]\to \{-r,s\}$ with $f([n]) = 0$ contains a zero-sum $k$-block.
\end{conjecture}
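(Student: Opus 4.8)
The plan is to reduce the existence of a zero-sum $k$-block to a discrete intermediate-value argument, and then to rule out the two ``monotone'' alternatives by a double-counting estimate that is sharp to leading order. Set $S(0)=0$ and $S(j)=f(\{1,\dots,j\})$ for $1\le j\le n$, so that the weight of the $k$-block $\{i+1,\dots,i+k\}$ is $g(i):=S(i+k)-S(i)$ for $0\le i\le n-k$. A block containing $p$ entries equal to $s$ and $k-p$ entries equal to $-r$ has weight $ps-(k-p)r=p(r+s)-kr$, which is a multiple of $r+s$ since $r+s\mid k$. Moreover consecutive block weights satisfy $g(i+1)-g(i)=f(i+k+1)-f(i+1)\in\{-(r+s),0,r+s\}$. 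Hence if some $g(i)$ is positive and some is negative, the sequence $g(0),\dots,g(n-k)$ cannot skip over $0$, producing a zero-sum block. It therefore suffices to show that, once $n$ is as large as in the statement, the weights cannot all be positive and cannot all be negative.

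A block has positive weight exactly when it contains at most $q_0-1$ entries equal to $-r$, where $q_0:=\frac{sk}{r+s}$ counts the $-r$'s in a zero-sum block and $p_0:=\frac{rk}{r+s}=k-q_0$ counts the $s$'s. Assume for contradiction that every block is positive. Then each of the $n-k+1$ windows $W_i=\{i+1,\dots,i+k\}$ contains at most $q_0-1$ entries equal to $-r$, while $f([n])=0$ forces the total number of $-r$'s to equal $b=\frac{s}{r+s}n=\frac{q_0}{k}n$. I would then double-count incidences between windows and $(-r)$-positions: summing over all windows gives at most $(q_0-1)(n-k+1)$, while each interior position lies in exactly $k$ windows. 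Writing the count as $kb$ minus a boundary deficiency $D$ (positions within distance $k$ of an endpoint lie in fewer windows) yields $q_0n-D\le(q_0-1)(n-k+1)$.

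The crux is to bound $D$ sharply. Since the first window already contains at most $q_0-1$ entries equal to $-r$, at most $q_0-1$ of the deficient left-boundary positions are $-r$'s, and their total deficiency $\sum(k-x)$ is maximized by placing them at $x=1,\dots,q_0-1$; the right boundary is symmetric. This gives $D\le(q_0-1)(2k-q_0)$, and substituting into the previous inequality collapses to
$$n\le(q_0-1)(k-q_0+1)=(q_0-1)(p_0+1)=\frac{rs}{(r+s)^2}k^2+\frac{s-r}{r+s}k-1.$$
Running the identical argument with the roles of $s$ and $-r$ (equivalently $p_0$ and $q_0$) interchanged rules out the all-negative alternative once $n>(p_0-1)(q_0+1)=\frac{rs}{(r+s)^2}k^2+\frac{r-s}{r+s}k-1$. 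Taking $c(r,s)=\frac{\abs{r-s}}{r+s}$ (or, more comfortably, $c(r,s)=1$) makes the hypothesis $n\ge\frac{rs}{(r+s)^2}k^2+c(r,s)k$ exceed both thresholds, so neither monotone alternative holds and the intermediate-value step delivers a zero-sum $k$-block.

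I expect the main obstacle to be exactly the boundary deficiency estimate. The naive disjoint-window bound only yields $n\lesssim\frac{s}{r+s}k^2$, which overshoots the conjectured constant $\frac{rs}{(r+s)^2}$ by the factor $\frac{r}{r+s}$; recovering the correct leading constant requires the sliding-window double count together with the sharp bound on $D$. Some care is also needed to confirm that the extremal placement of boundary $(-r)$'s is genuinely the worst case, and to record that the integrality constraints implied by $f([n])=0$ (namely $\tfrac{r+s}{\gcd(r,s)}\mid n$, which makes $b$ and $q_0$ the exact integer counts used above) are automatically satisfied whenever a zero-sum sequence exists.
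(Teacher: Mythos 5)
Your proposal is correct and shares the paper's overall skeleton: the discrete intermediate-value step (your $g(i+1)-g(i)\in\{-(r+s),0,r+s\}$ argument is exactly Lemma~\ref{interpolation-lemma}) reduces everything to ruling out the two monotone alternatives, and the all-negative case is handled by negating the sequence, just as in the paper. Where you genuinely diverge is in the counting that kills the all-positive case. The paper (Theorem~\ref{rs-theorem} with $q=0$) partitions $[n]$ into disjoint consecutive $k$-blocks plus a remainder $R$, deduces $f(R)\le -(r+s)m$ from $f([n])=0$, and reaches a contradiction by lower-bounding the sizes of the two pieces of the rightmost overlapping $k$-block via $\abs{f(B)}\le s\abs{B}$ when $f(B)\ge 0$ and $\abs{f(B)}\le r\abs{B}$ when $f(B)\le 0$. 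You instead double-count incidences between all $n-k+1$ sliding windows and the positions carrying $-r$; the sharp boundary-deficiency bound $D\le(q_0-1)(2k-q_0)$ is what recovers the leading constant, and your threshold $n\le(q_0-1)(p_0+1)$ matches the paper's bound to within $O(1)$ and yields the same $c(r,s)\approx\frac{\abs{r-s}}{r+s}$. Two minor points: your claim that each interior position lies in exactly $k$ windows requires $n\ge 2k-1$, so you should take $c(r,s)$ at least $2$ rather than $1$ to cover the small values of $k$ for which the quadratic term does not yet dominate; and the extremal placement of the boundary $-r$'s at $x=1,\dots,q_0-1$ is justified simply because $k-x$ is decreasing and $q_0-1\le k-1$. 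Neither affects correctness.
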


There is also the stronger question of finding the minimum such $n$ needed to guarantee the existence of a zero-sum $k$-block.

\begin{problem}[\cite{CHM}, Problem 1]\label{rs-problem}
Let $r$, $s$, and $k$ be positive integers such that $r+s$ divides $k.$ Determine the minimum value of $N(r,s,k)$ such that for $n\ge N(r,s,k)$ Conjecture \ref{rs-conjecture} holds.
\end{problem}


In Section \ref{zero-sum-blocks} we answer this stronger question with Theorem \ref{tight-rs-corollary}, thereby resolving Conjecture \ref{rs-conjecture} in the affirmative. Since the closed-form expression for $N(r,s,k)$ that we will prove is rather unwieldy, we define a notational shorthand which will be used throughout Section \ref{zero-sum-blocks}.

\begin{definition}
Define the function $$N_k(x,y,z) = \begin{cases}\bigp{\frac{xyk}{(x+y)^2} - \frac{x+yz}{x+y}}k + \frac{yk}{x+y}+z & z\le x \\ \bigp{\frac{xyk}{(x+y)^2} - \frac{x+x(x+y-z)}{x+y}}k + \frac{yk}{x+y}-(x+y-z) & z> x.\end{cases}$$
\end{definition}

\begin{theorem}\label{tight-rs-corollary}
Let $r$, $s$, and $k$ be positive integers such that $r<s$, $\gcd(r,s)=1$, and $r+s$ divides $k.$ Let $t\in [0, r+s-1]$ be the unique integer such that $\frac{sk}{r+s}-1+t\equiv 0\bmod r+s$ and let $t'\in [0,r+s-1]$ be the unique integer such that $\frac{rk}{r+s}-1+t'\equiv 0\bmod r+s.$ Then $$N(r,s,k) = \max\bigc{k, N_k(r,s,t), N_k(s,r,t')}.$$
\end{theorem}



Another direction of generalization is to replace the structure of blocks with that of arithmetic progressions. Studying the arithmetic progression case is motivated by the theorem of Roth, because Theorem \ref{roth} also deals with arithmetic progressions in $[n].$ As sequences of $k$ consecutive integers are $k$-term arithmetic progressions with common difference 1, it is natural to ask whether Corollary \ref{+-1-corollary} offers the best possible value for arithmetic progressions as well. Caro et al.\! remark that this is not the case. They believe the problem of finding precisely the corresponding minimum positive integer for arithmetic progressions is difficult.

\begin{problem}[\cite{CHM}, Problem 2]
Let $r$, $s$, and $k$ be positive integers such that $r+s$ divides $k.$ Determine the minimum value $M(r,s,k)$ such that if $n\ge M(r,s,k)$ then for every function $f:[n]\to \{-r,s\}$ with $f([n])=0$ there exists a $k$-term arithmetic progression $A\subeq [n]$ with $f(A) = 0.$
\end{problem}

In the context of general subsequences such as in \cite{AMSSV}, \cite{B}, \cite{CY}, and \cite{SST}, as opposed to subsequences with indices that are consecutive or in arithmetic progression studied in this paper, it seems that the upper bounds on lengths of sequences avoiding $k$-length zero-sum subsequences are linear in $k.$ For example, see the bounds proven by Augspurger, Minter, Shoukry, Sissokho, and Voss in \cite{AMSSV} or by Berger in \cite{B}.

On the other hand, when one restricts the subsequences to $k$-blocks or $k$-term arithmetic progressions, it appears that the bounds on the length become quadratic in $k$, such as in Theorem \ref{tight-rs-corollary}, or at least superlinear in $k.$ Along these lines, Caro et al.\! conjecture that the asymptotic bound on $n$ for a zero-sum subsequence indexed by a $k$-term arithmetic progression should remain quadratic in $k.$

\begin{conjecture}[\cite{CHM}, Conjecture 5.6]\label{quadratic-conjecture}
Let $r$, $s$, and $k$ be positive integers such that $r+s$ divides $k.$ There are positive constants $c_1 = c_1(r,s)$ and $c_2=c_2(r,s)$ such that $$c_1 k^2 \le M(r,s,k)\le c_2 k^2.$$
\end{conjecture}

Clearly a bound obtained for zero-sum $k$-blocks in the resolution of Conjecture \ref{rs-conjecture} serves as an upper bound for $M(r,s,k).$ In particular, for any $c(r,s)$ for which Conjecture \ref{rs-conjecture} is true we have $$M(r,s,k)\le \frac{rs}{(r+s)^2} k^2 + c(r,s)k.$$ 
Hence the open problem is the lower bound. In Section \ref{arithmetic} we resolve Conjecture \ref{quadratic-conjecture} in the case $r=s=1,$ namely for $\{-1,1\}$-sequences.

\begin{theorem}\label{quadratic-theorem}
We have $$\frac{1}{6}k^2 + O(k) \le M(1,1,k) \le \frac{1}{4}k^2 + O(k)$$ for all even $k.$\footnote{Recall that in order for a zero-sum subsequence of $k$ elements to exist, $k$ must be even. }
\end{theorem}

Our results in Section \ref{superlinear} are a superlinear lower bound for sufficiently large $k$ on $M(r,s,k)$ for $\{-r,s\}$-sequences and a construction that shows tightness for the constant $c_2(1,1)$ defined in Conjecture \ref{quadratic-conjecture}.

\begin{theorem}\label{not-quadratic-theorem}
Let $r$, $s$, and $k$ be positive integers such that $r+s$ divides $k.$ Then $$\Omega_{r,s}\bigp{k^{1.475}}\le M(r,s,k)\le \frac{rs}{(r+s)^2}k^2 + O(k)$$ for sufficiently large $k.$
\end{theorem}

Finally, in Section \ref{open} we mention some open problems with regards to improving the bounds on the length of $\{-r,s\}$-sequences that avoid zero-sum arithmetic subsequences.



\subsection*{Acknowledgements}
This research was funded by NSF/DMS grant 1659047 and NSA grant H98230-18-1-0010 as part of the 2019 Duluth Research Experience for Undergraduates (REU) program. The author would like to thank Joseph Gallian for running the program, as well as Aaron Berger and Brian Lawrence for comments on the paper. Finally, the author would like to thank the anonymous reviewers for helpful comments, corrections, and suggestions.

\section{Existence of zero-sum blocks in $\{-r,s\}$-sequences}\label{zero-sum-blocks}

Recall the statement of Theorem \ref{+-1-theorem}, the principal result of Caro et al.\! in \cite{CHM}, which gives a lower bound on $n$ for when a small-sum $k$-block in a bounded-sum $\{-1,1\}$-sequence always exists. It turns out that Theorem \ref{+-1-theorem} is the best possible bound for the parameters involved in the sense that for $$n = \frac{1}{2(t+2)}k^2 + \frac{q-s}{t+2}k-\frac{t}{2}+s-1$$ there are examples of functions having the highest possible value for $f([n]),$ namely $q,$ such that no $k$-block $B\subeq [n]$ satisfies $\abs{f(B)}\le t.$ In particular there is the following theorem:

\begin{theorem}[\cite{CHM}, Theorem 2.9]\label{+-1-extremal-theorem}
Let $k$, $t$, and $q$ be integers such that $q\ge 0$, $0\le t<k$, $t\equiv k\bmod 2$, and $$k < \frac{1}{2(t+2)}k^2 + \frac{q-s}{t+2}k - \frac{t}{2} + s,$$ where $s\in [0,t+1]$ is the unique integer satisfying $s\equiv q + \frac{k-t-2}{2}\bmod t+2.$ Then, for $$n= \frac{1}{2(t+2)}k^2 + \frac{q-s}{t+2}k - \frac{t}{2} + s-1,$$ there exists a function $f:[n]\to \{-1,1\}$ satisfying $\abs{f([n])}=q$ and $\abs{f(B)}>t$ for all $k$-blocks $B\subeq [n].$
\end{theorem}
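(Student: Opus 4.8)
The plan is to prove this by writing down an explicit extremal $\{-1,1\}$-sequence, since the assertion is pure existence at the single length $n$ sitting one below the threshold of Theorem~\ref{+-1-theorem}. Set $d := \frac{k-t-2}{2}$, a nonnegative integer because $t\equiv k\bmod 2$ and $t<k$. The first step is to translate the two target conditions into a placement problem. A $k$-block $B$ satisfies $\abs{f(B)}>t$ as soon as $f(B)\ge t+2$, and since $f(B)=k-2\cdot\#\bigc{-1\text{'s in }B}$, it is enough to arrange that every $k$-block contains exactly $d$ entries equal to $-1$ (forcing $f(B)=k-2d=t+2$), while the global weight is $f([n])=q$; as $q\ge 0$ this already gives $\abs{f([n])}=q$, so no sign reduction is needed.

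The construction I propose is periodic modulo $k$, placing the $-1$'s in equally spaced bands of width $d$:
\[
f(i)=\begin{cases}-1 & (i\bmod k)\in\{1,2,\ldots,d\},\\ +1 &\text{otherwise,}\end{cases}\qquad i=1,\ldots,n,
\]
where the length is taken to be $n=mk+d+s$ with $s\in[0,t+1]$ the residue from the statement (so $s\equiv q+d\bmod(t+2)$, since $\frac{k-t-2}{2}=d$) and $m:=\frac{q+d-s}{t+2}$. Verifying the block condition is then immediate and is the conceptual heart: because a $k$-block consists of $k$ consecutive integers, it contains exactly one representative of each residue class modulo $k$; the $-1$'s occupy precisely the positions of $[n]$ whose residue lies in $\{1,\ldots,d\}$, so every block $B\subeq[n]$ contains exactly $d$ entries equal to $-1$, whence $f(B)=t+2>t$ for all blocks. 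No sliding or boundary case analysis is required once the pattern is phrased through residues.

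It remains to confirm the two bookkeeping identities. Each of the $d$ residue classes $1,\ldots,d$ is represented exactly $m+1$ times in $[n]=[mk+d+s]$ (the occurrences being $r,k+r,\ldots,mk+r$, all at most $n$ and with $(m+1)k+r>n$ since $d+s<k$), so there are exactly $(m+1)d$ minus-ones and $f([n])=n-2(m+1)d=m(t+2)-d+s$, which equals $q$ by the choice of $m$. For the length, substituting $d=\frac{k-t-2}{2}$ into $n=mk+d+s=\frac{(q+d-s)k}{t+2}+d+s$ and simplifying gives exactly
\[
n=\frac{1}{2(t+2)}k^2+\frac{q-s}{t+2}k-\frac{t}{2}+s-1,
\]
the asserted value. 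The hypothesis $k<\frac{1}{2(t+2)}k^2+\cdots$ forces $n\ge k$, so at least one $k$-block exists and the construction is non-vacuous, and it also guarantees $m\ge 1$.

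The only genuinely delicate point is the choice of the terminal remainder $d+s$ rather than the naive $d$. One must take this larger remainder so that the total weight lands on $q$ exactly while the length is pushed as far as possible; this is precisely what the residue $s$ is tracking, and it is why the extremal length is $mk+d+s$ and not $mk+d$. I expect the main obstacle to be exactly this interaction between hitting $\abs{f([n])}=q$ on the nose and simultaneously maximizing $n$, together with confirming $m\in\ZZ_{\ge 0}$ via the divisibility of $q+d-s$ by $t+2$. Everything else reduces to the residue-counting observation above and a direct algebraic simplification, so once the periodic band pattern is written down the verification is short.
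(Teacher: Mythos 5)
Your construction is correct and complete: the periodic placement of $-1$'s on residues $1,\ldots,d$ modulo $k$ with $d=\frac{k-t-2}{2}$ forces every $k$-block to have weight exactly $t+2$, and your bookkeeping for $m=\frac{q+d-s}{t+2}$ (an integer by the defining congruence for $s$, nonnegative since $q+d-s>-(t+2)$) and for the length $n=mk+d+s$ checks out against the stated formula. The paper itself states this theorem as a citation from \cite{CHM} without reproducing a proof, so there is nothing internal to compare against; your argument is in the same spirit as the extremal constructions the paper does carry out (e.g.\ Lemma \ref{rs-construction}), which likewise tile $[n]$ with periodic bands of the negative value followed by a remainder block tuned to hit the prescribed total weight.
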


Yet it is not true that all values of $n$ greater than $k$ and less than $$\frac{1}{2(t+2)}k^2 + \frac{q-s}{t+2}k-\frac{t}{2}+s$$ exhibit constructions that avoid zero-sum $k$-blocks. For example, taking $t=0$ and $q=0$ in the statement of Theorem \ref{+-1-extremal-theorem}, we prove the following proposition that guarantees a zero-sum $k$-block in a zero-sum sequence of length $2k$:

\begin{proposition}
Let $k\ge 2$ be even. Then for any function $f:[n]\to \{-1,1\}$ with $n=2k$ and $f([n]) = 0,$ there is a zero-sum $k$-block in $[n].$
\end{proposition}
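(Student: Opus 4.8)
The plan is to run a discrete intermediate value argument on the sequence of block weights. Since $n = 2k$, there are exactly $k+1$ distinct $k$-blocks in $[n]$, namely $B_i = \{i, i+1, \ldots, i+k-1\}$ for $i = 1, 2, \ldots, k+1$, and I would record their weights $g(i) := f(B_i)$. First I would establish two structural facts. Telescoping gives $g(i+1) - g(i) = f(i+k) - f(i)$ for each $1 \le i \le k$, and since every value of $f$ lies in $\{-1,1\}$, each consecutive difference lies in $\{-2,0,2\}$. Moreover, because $B_1 = \{1,\ldots,k\}$ and $B_{k+1} = \{k+1,\ldots,2k\}$ partition $[2k]$, the hypothesis $f([n]) = 0$ forces $g(1) + g(k+1) = 0$, so $g(k+1) = -g(1)$.

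The key point is a parity observation: since $k$ is even, each $g(i)$ is a sum of an even number of $\pm 1$ terms and is therefore even. Thus $g$ is an integer sequence taking only even values, starting at $g(1)$ and ending at $-g(1)$, whose consecutive terms differ by at most $2$. If $g(1) = 0$ then $B_1$ is already a zero-sum $k$-block and we are done. Otherwise, without loss of generality $g(1) > 0$, so $g(k+1) = -g(1) < 0$, and I would take the smallest index $i$ with $g(i) \le 0$. Then $g(i-1) > 0$, hence $g(i-1) \ge 2$ by evenness, while the step $g(i) - g(i-1) \in \{-2,0,2\}$ together with $g(i) \le 0 < g(i-1)$ forces this step to equal $-2$; therefore $g(i) = g(i-1) - 2 \ge 0$, and combined with $g(i) \le 0$ we conclude $g(i) = 0$. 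The block $B_i$ is then the desired zero-sum $k$-block.

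There is essentially no hard step here; the only thing that must not be skipped is the parity remark, which is exactly where the hypothesis that $k$ is even enters. Without it the block weights could be odd, and a step of size $2$ could carry the sequence from $+1$ directly to $-1$, skipping the value $0$ and breaking the argument. It is worth noting that $2k$ is far below the threshold $\tfrac{k^2}{4} + O(k)$ of Corollary \ref{+-1-corollary}, so this proposition genuinely lies outside the regime covered by that corollary and illustrates that not every length between $k$ and the extremal length of Theorem \ref{+-1-extremal-theorem} admits a block-avoiding construction.
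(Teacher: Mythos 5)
Your proof is correct and follows essentially the same route as the paper: both split $[2k]$ into the two disjoint $k$-blocks $\{1,\ldots,k\}$ and $\{k+1,\ldots,2k\}$, observe that $f([n])=0$ forces their weights to have opposite signs, and then pass to a discrete intermediate value argument on the sequence of block weights. The only difference is that the paper invokes the Interpolation Lemma (Lemma~\ref{interpolation} with $t=0$) as a black box, whereas you reprove that lemma inline via the parity and step-size observations, which is exactly its content.
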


\begin{proof}
Split the set $[n]$ into two disjoint $k$-blocks $B_1$ and $B_2.$ If $f(B_1) = 0$ then we are done. Otherwise assume that $f(B_1)\neq 0.$ We know that
\begin{align*}
    f(B_2) &= f([n]) - f(B_1)
    = 0 - f(B_1)
    = -f(B_1),
\end{align*}
which implies that $f(B_1)$ and $f(B_2)$ have opposite sign. Now we apply the ``Interpolation Lemma'', which is Lemma \ref{interpolation} below, with $t=0.$ This lemma implies the existence of a zero-sum $k$-block in $[n].$
\end{proof}

There are two goals of this section. The first goal is to resolve Conjecture \ref{rs-conjecture} posed by Caro et al.\! regarding the existence of zero-sum blocks in $\{-r,s\}$-sequences with a constant $c(r,s)\approx \frac{\abs{r-s}}{r+s}.$ The proof is analogous to that of Theorem \ref{+-1-theorem}. The second goal is to solve Problem \ref{rs-problem} by proving Theorem \ref{tight-rs-corollary}.

The condition that $r+s$ divides $k$ in Conjecture \ref{rs-conjecture} is a technical condition to ensure that there exist $k$ numbers each of which is either $-r$ or $s$ that sum to 0. In particular, if $\gcd(r,s) = 1,$ then one can see that the condition that $r+s$ divides $k$ is necessary for zero-sum $k$-blocks to exist. For the rest of this paper we assume that $\gcd(r,s)=1$ when dealing with $\{-r,s\}$-sequences. We do so because we can divide all integers by $\gcd(r,s)$ to produce two relatively prime integers noting that the relevant problem is equivalent under scalar multiplication of all terms. In this section we will make use of the following fact:

\begin{fact}\label{rsk-fact}
If $a_1,a_2,\ldots,a_k\in \{-r,s\},$ then $a_1+a_2+\cdots + a_k \equiv sk\bmod r+s.$ In particular, if $k\equiv 0\bmod r+s$ then $a_1+a_2+\cdots + a_k \equiv 0\bmod r+s.$
\end{fact}

In our argument we will adapt the following ``Interpolation Lemma'', which is an important ingredient in Caro et al.'s proof of Theorem \ref{+-1-theorem}:

\begin{lemma}[\cite{CHM}, Lemma 2.1]\label{interpolation}
Let $t$, $k$, and $n$ be integers such that $t\equiv k\bmod 2$ and $\abs{t}<k\le n.$ Let $f:[n]\to \{-1,1\}$ be any function. If there are $k$-blocks $S$ and $T$ in $[n]$ such that $f(S)<t$ and $f(T)>t,$ then there is a $k$-block $B$ in $[n]$ with $f(B)=t.$
\end{lemma}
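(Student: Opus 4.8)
The plan is to regard the weight of a $k$-block as a function of where the block starts, and then to run a discrete intermediate value argument in which the parity hypothesis $t\equiv k\bmod 2$ plays an essential role. For each admissible starting position $a\in\{1,\ldots,n-k+1\}$ I would write $B_a=\{a,a+1,\ldots,a+k-1\}$ for the $k$-block beginning at $a$ and set $g(a)=f(B_a)$. The hypotheses then say that $S=B_{a_S}$ and $T=B_{a_T}$ for some indices with $g(a_S)<t$ and $g(a_T)>t$; since reversing the direction of sliding interchanges the two roles symmetrically, I may assume $a_S<a_T$.

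First I would record two elementary facts about $g$. The first is a discrete Lipschitz bound: sliding the window one step to the right deletes the term $f(a)$ and inserts $f(a+k)$, so
\[
g(a+1)-g(a)=f(a+k)-f(a)\in\{-2,0,2\}
\]
because $f$ takes values in $\{-1,1\}$. The second is a parity constraint: each $k$-block is a sum of exactly $k$ values $\pm1$, so $g(a)\equiv k\equiv t\pmod 2$ for every $a$, where the last congruence is the hypothesis. Granting these, I would let $a^\ast$ be the largest index in $[a_S,a_T]$ with $g(a^\ast)<t$; this set is nonempty since $g(a_S)<t$, and $a^\ast<a_T$ since $g(a_T)>t$. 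Maximality gives $g(a^\ast+1)\ge t$, while $g(a^\ast)<t$ together with the parity constraint forces $g(a^\ast)\le t-2$, whence the Lipschitz bound yields $g(a^\ast+1)\le g(a^\ast)+2\le t$. Combining the two inequalities gives $g(a^\ast+1)=t$, so $B=B_{a^\ast+1}$ is the desired $k$-block.

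The argument is genuinely short, and the only subtlety---the one place where the hypothesis $t\equiv k\bmod 2$ cannot be dropped---lies in excluding the scenario where $g$ jumps from a value strictly below $t$ to one strictly above $t$ in a single step, skipping the target value. I expect this to be the crux of the matter: it is ruled out precisely because each step has size at most $2$ while all values of $g$ share the parity of $t$, so a transition from $t-1$ to $t+1$ would violate parity and the only admissible way to cross from $\le t-2$ to $\ge t$ is to land exactly on $t$. Everything else reduces to the routine observation that a sliding unit-window sum changes ``continuously'' in steps of $0$ or $\pm2$.
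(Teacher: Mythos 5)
Your proof is correct and follows exactly the sliding-window intermediate-value argument that the paper uses for its $\{-r,s\}$ analogue (Lemma~\ref{interpolation-lemma}): consecutive block sums differ by at most $2$ and all share the parity of $k$ (hence of $t$), so the value $t$ cannot be skipped. The paper cites Lemma~\ref{interpolation} from \cite{CHM} without reproducing its proof, but your write-up supplies precisely the details the paper's terse ``Intermediate Value Theorem argument'' alludes to, including the correct identification of the parity hypothesis as the step that forbids jumping from $t-1$ to $t+1$.
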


We begin the proof of Conjecture \ref{rs-conjecture} by proving an analog of Lemma \ref{interpolation}:

\begin{lemma}[Interpolation Lemma]\label{interpolation-lemma}
Let $r$, $s$, $k$, and $n$ be positive integers such that $k\le n$ and $r+s$ divides $k,$ and let $f:[n]\to \{-r,s\}.$ If there are $k$-blocks $S$ and $T$ in $[n]$ such that $f(S)<0$ and $f(T)>0,$ then there is a $k$-block $B$ in $[n]$ with $f(B)=0.$
\end{lemma}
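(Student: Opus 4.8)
The plan is to mimic the discrete intermediate value argument behind the original Interpolation Lemma (Lemma \ref{interpolation}), adapting it to the fact that here consecutive block sums change in steps of size $r+s$ rather than $2$.

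First I would connect the two given blocks by a sliding window. Let $B_0 = S, B_1, \ldots, B_m = T$ be the sequence of $k$-blocks obtained by translating a length-$k$ window one unit at a time from $S$ toward $T$; since $S$ and $T$ both lie in $[n]$ and each intermediate block sits between them, every $B_i\subeq [n]$.

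Next I would record how the weight evolves along this path. Translating the window by one unit deletes the term at one end of the current block and inserts the adjacent term just past the other end; both of these values lie in $\{-r, s\}$, so
\[
f(B_{i+1}) - f(B_i) \in \{-(r+s),\, 0,\, r+s\}.
\]
In particular, $|f(B_{i+1}) - f(B_i)| \le r+s$ for every $i$. At the same time, because $r+s$ divides $k$, Fact \ref{rsk-fact} guarantees that each $f(B_i)$ is an integer multiple of $r+s$.

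Finally I would run the discrete intermediate value step. Since $f(B_0) = f(S) < 0$ while $f(B_m) = f(T) > 0$, there is a largest index $i^*$ with $f(B_{i^*}) < 0$, and $i^* < m$. As $f(B_{i^*})$ is a negative multiple of $r+s$, we have $f(B_{i^*}) \le -(r+s)$, and therefore $f(B_{i^*+1}) \le f(B_{i^*}) + (r+s) \le 0$. By maximality of $i^*$ we also have $f(B_{i^*+1}) \ge 0$, which forces $f(B_{i^*+1}) = 0$, so $B_{i^*+1}$ is the desired zero-sum $k$-block. The crux of the argument — and the step I would be most careful to verify — is the interplay between the step bound $|f(B_{i+1}) - f(B_i)| \le r+s$ and the divisibility $f(B_i) \equiv 0 \bmod{(r+s)}$: together they prevent the running weight from leaping over $0$, which is precisely the phenomenon that could occur if $r+s$ failed to divide $k$.
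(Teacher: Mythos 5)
Your proof is correct and follows essentially the same route as the paper: the paper likewise lists the sliding $k$-blocks, invokes Fact \ref{rsk-fact} to get $f(B_i)\equiv 0 \bmod{(r+s)}$, bounds consecutive differences by $r+s$, and concludes ``by an Intermediate Value Theorem argument.'' You have merely written out that final discrete IVT step explicitly, which the paper leaves to the reader.
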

\begin{proof}
Denote the $n-k+1$ $k$-blocks in $[n]$ by $B_1,B_2,\ldots,B_{n-k+1},$ where $B_i = [i,i+k-1]$ for $1\le i\le n-k+1.$ Let $S = B_s$ and $T = B_t$ for some indices $s$ and $t.$ By Fact \ref{rsk-fact}, we know that the sum of every $k$-block is $0\bmod r+s,$ so $f(B_i) \equiv 0\bmod r+s$ for all $i\in [1,n-k+1].$ Also note that $\abs{f(B_i) - f(B_{i+1})} \le r+s$ since $B_i$ and $B_{i+1}$ differ in exactly two elements. We conclude by an Intermediate Value Theorem argument that there exists a zero-sum $k$-block in $[n].$
\end{proof}

We now prove a slight generalization of Conjecture \ref{rs-conjecture} in which the weight of the entire sequence, namely $f([n]),$ needs only be bounded in absolute value by a constant $q,$ as it is done similarly in Corollary \ref{+-1-corollary} and Theorem \ref{+-1-extremal-theorem}. Before we state this generalization, we need the following simple proposition, which will be used throughout Section \ref{zero-sum-blocks}:

\begin{proposition}\label{rs-claim1}
For positive integers $r$ and $s,$ let $f:[n]\to \{-r,s\}$ be a function and let $B\subeq [n]$ be any block. If $f(B)\ge 0$ then $\abs{f(B)}\le s\cdot \abs{B}$ and if $f(B)\le 0$ then $\abs{f(B)}\le r\cdot \abs{B}.$
\end{proposition}

\begin{proof}
Note that we have $-r \leq f(i) \leq s$ for all $i$, which implies that $-r \cdot \abs{B} \leq f(B) \leq s \cdot \abs{B}$ by summing over all $\abs{B}$ terms in $B.$ The right inequality implies that $\abs{f(B)}\le s\cdot \abs{B}$ when $f(B)\ge 0,$ and the left inequality when multiplied by $-1$ implies that $\abs{f(B)}\le r\cdot \abs{B}$ when $f(B)\le 0.$
\end{proof}

\begin{theorem}\label{rs-theorem}
Let $r$, $s$, and $k$ be positive integers such that $r+s$ divides $k$ and let $q\ge 0$ be an integer. Then if $$n\ge \max\bigc{k, k\floor{\frac{q-r}{r+s}+\frac{rsk}{(r+s)^2}}+\frac{sk}{r+s}+\frac{r}{s},k\floor{\frac{q-s}{r+s}+\frac{rsk}{(r+s)^2}}+\frac{rk}{r+s}+\frac{s}{r}},$$ every function $f:[n]\to \{-r,s\}$ with $\abs{f([n])} \le q$ contains a zero-sum $k$-block.
\end{theorem}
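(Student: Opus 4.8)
The plan is to reduce the statement to the Interpolation Lemma (Lemma \ref{interpolation-lemma}) by showing that, under the given lower bound on $n$, at least one of the two alternatives of that lemma must hold. Concretely, I would argue by contradiction: suppose every $k$-block $B\subeq[n]$ fails to be zero-sum. Since $\gcd(r,s)=1$ and $r+s\mid k$, Fact \ref{rsk-fact} tells us every $k$-block has weight divisible by $r+s$, and Lemma \ref{interpolation-lemma} says we cannot have a $k$-block of positive weight and another of negative weight simultaneously. Hence all $k$-blocks in $[n]$ have weight of the same sign; by symmetry (replacing $f$ by $-f$ swaps the roles of $r$ and $s$, which is why the bound is a maximum of two symmetric quantities) assume all $k$-blocks have weight $\ge r+s$, i.e.\ strictly positive. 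The goal is then to show this forces $f([n])$ to be large, contradicting $\abs{f([n])}\le q$ once $n$ exceeds the stated threshold.

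The key step is an averaging/tiling estimate. First I would handle the case $k\mid n$ cleanly: partitioning $[n]$ into $n/k$ disjoint $k$-blocks, each of weight at least $r+s$, gives $f([n])\ge (r+s)\cdot n/k$, and one checks that $n/k$ large enough already exceeds the bound. The real work is controlling the general $n$ not divisible by $k$. Here I would slide a window of length $k$ across $[n]$ and sum the block weights $f(B_1)+f(B_2)+\cdots+f(B_{n-k+1})$. Each position $i\in[n]$ is covered by a known number of windows (a trapezoidal count: $\min\{i,k,n-k+1,n-i+1\}$ many), so this sum equals $\sum_{i} (\text{coverage of }i)\, f(i)$. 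Since each $f(B_j)\ge r+s$, the left side is at least $(r+s)(n-k+1)$, and I would bound the right side from above in terms of $f([n])$ by separating the "fully covered" middle positions (coverage exactly $k$) from the two ramps at the ends. The extremal configuration that makes $f([n])$ as small as possible while keeping every block positive is the one where the positive mass is pushed to the boundary ramps; extracting this gives precisely the floor expression $\floor{\tfrac{q-r}{r+s}+\tfrac{rsk}{(r+s)^2}}$ together with the additive $\tfrac{sk}{r+s}+\tfrac{r}{s}$ correction.

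I expect the main obstacle to be pinning down the correct extremal construction and matching the floor and the fractional correction terms exactly, rather than proving a clean asymptotic bound. The dominant term $\tfrac{rsk}{(r+s)^2}\cdot k$ comes out of the averaging argument naturally, since a block of total weight $r+s$ built from $a$ copies of $-r$ and $b$ copies of $s$ with $a+b=k$ has $b$ near $\tfrac{rk}{r+s}$, and the ramp contribution scales like $k\cdot(\text{excess per block})$; but getting the lower-order constants $\tfrac{r}{s}$ and $\tfrac{s}{r}$ and the $\floor{\cdot}$ right requires carefully tracking the value of $q\bmod(r+s)$ and the residue of $\tfrac{sk}{r+s}$, mirroring the role played by $s$ (the residue parameter) in Theorem \ref{+-1-theorem}. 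I would therefore organize the proof so that the sliding-window inequality produces a bound of the form $n\le k\cdot m + (\text{boundary term})$ where $m$ counts how many disjoint nearly-balanced blocks can be stacked before the accumulated weight reaches $q$, and then verify that the two expressions in the $\max$ correspond to the two sign choices, carrying the $r\leftrightarrow s$ symmetry through to confirm the second term.
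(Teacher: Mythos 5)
Your opening moves coincide exactly with the paper's: reduce via Lemma \ref{interpolation-lemma} and Fact \ref{rsk-fact} to the case where every $k$-block has weight at least $r+s$, argue by contradiction against $\abs{f([n])}\le q$, and dispose of the all-negative case by negating $f$ (which is why the bound is a max of two $r\leftrightarrow s$-symmetric quantities). Where you genuinely diverge is the quantitative core. The paper does not slide a window: it writes $n=mk+a$, tiles $[n]$ by $m$ disjoint $k$-blocks plus a remainder $R$ of length $a$, deduces $f(R)\le q-(r+s)m$ from the tiling, and then examines only the single rightmost $k$-block $B^*=B'\cup R$. The pointwise bound $-r\le f(i)\le s$ (the paper's Claim \ref{rs-claim1}) converts the weight inequalities $f(B')\ge (r+s)(m+1)-q$ and $f(R)\le q-(r+s)m$ into length inequalities on $\abs{B'}$ and $\abs{R}$ that contradict either $a<k$ or $\abs{B^*}=k$. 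This local argument at the right end is what produces the floor and the $\frac{r}{s}$, $\frac{s}{r}$ corrections with no further case analysis.

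Your sliding-window identity can be made to work, but as sketched it has a concrete gap that changes the leading constant. Writing $\sum_j f(B_j)=\sum_i c_i f(i)=k\,f([n])-\sum_i (k-c_i)f(i)$ with your coverage numbers $c_i$, if on the two ramps you only use the pointwise bound $f(i)\ge -r$, then $\sum_i(k-c_i)f(i)\ge -rk(k-1)$ and the chain $(r+s)(n-k+1)\le kq+rk(k-1)$ yields only $n\le \frac{r}{r+s}k^2+O(k)$ --- for $r=s=1$ that is $\frac12k^2$ rather than the required $\frac14k^2$. The missing ingredient is that the first $k$ positions and the last $k$ positions each themselves form a $k$-block of weight at least $r+s$, so each ramp contains at most $\frac{sk}{r+s}-1$ entries equal to $-r$; combined with the monotonicity of the weights $k-c_i$ along each ramp (a rearrangement step placing the $-r$'s at the extreme ends, where $k-c_i$ is largest), this pins each ramp's contribution near $-\frac{rs}{2(r+s)}k^2$ and restores the leading term $\frac{rs}{(r+s)^2}k^2$. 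Note also that the extremal configuration is the one with the \emph{negative} entries pushed to the boundary (as in the paper's Lemma \ref{rs-construction}, where the remainder block is all $-r$'s), not the positive mass as you wrote. Finally, extracting the exact floor and the $\frac{r}{s}$, $\frac{s}{r}$ terms through the averaging route still requires the residue bookkeeping you defer; the paper's two subcases on the value of $m$ relative to $\floor{\frac{q-r}{r+s}+\frac{rsk}{(r+s)^2}}$ are precisely where that bookkeeping happens.
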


\begin{proof}
If there are $k$-blocks $S$ and $T$ in $[n]$ such that $f(S) < 0$ and $f(T)>0$ then we are done by Lemma \ref{interpolation-lemma}. If $f(S)=0$ for some $k$-block $S$ we are done as well. Hence we can assume for the sake of contradiction that $f(S)>0$ for all $k$-blocks $S$ or $f(S)<0$ for all $k$-blocks $S.$ By Fact \ref{rsk-fact} we in fact have either $f(S)\ge r+s$ or $f(S)\le -r-s$ in the respective cases. We prove the first case $f(S)\ge r+s$ and reduce the case $f(S)\le -r-s$ to the first.
    
\begin{figure}
\centering
\includegraphics[width = 0.5\textwidth]{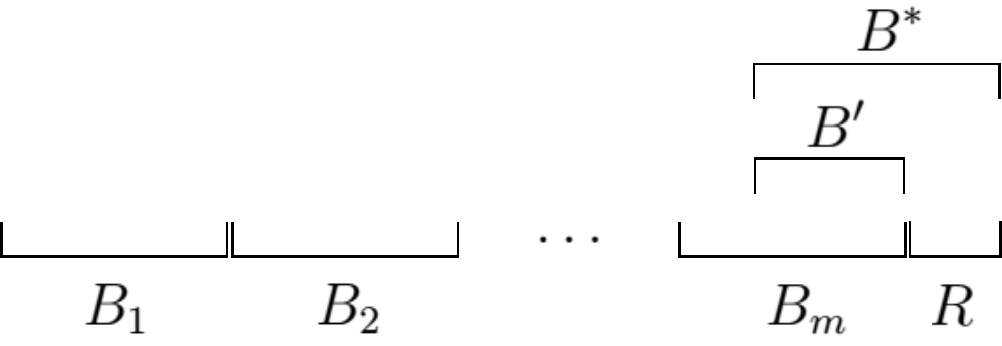}
\caption{Diagram depicting the $k$-blocks $B_1,B_2,\ldots,B_m,B^*$ and the blocks $B',R.$}
\label{split}
\end{figure}
    
\begin{enumerate}
    \item[\textbf{Case 1.}] \textit{We have $f(S)\ge r+s$ for all $k$-blocks $S.$}
    
    Write $n=mk+a$ where the quotient $m$ and remainder $a$ are positive integers and $0\le a\le k-1.$ Split $[n]$ into $m$ disjoint consecutive $k$-blocks $B_1,B_2,\ldots,B_m$ where $B_i = [k(i-1)+1,k(i-1)+k]$ and a remainder block $R = [n-a+1,n]$ that is potentially empty. See Figure \ref{split}.
    
    We know by hypothesis that $n\ge k,$ meaning $m\ge 1.$ We also know that $f(B_i)\ge r+s$ for all $i,$ implying
    \begin{align*}
        q &\ge f([n])
        \ge (r+s)m + f(R)
    \end{align*}
    and thus
    \begin{equation}\label{Rqrsm}
        f(R) \le q - (r+s)m.
    \end{equation}
    Let $B^*$ denote the rightmost $k$-block $[n-k+1,n]$ and let $B' = B^*\cap B_m$ so that $B^* = B' \cup R.$ We know that $$f(B') + f(R) = f(B^*) \ge r+s.$$ Using \eqref{Rqrsm}, this implies
    \begin{align*}
        f(B') &\ge r+s-f(R)
        \ge (r+s)(m+1)-q.
    \end{align*}
    We can see that $$n\ge k\floor{\frac{q-r}{r+s}+\frac{rsk}{(r+s)^2}}+\frac{sk}{r+s}+\frac{r}{s}$$ by hypothesis. This bound on $n$ is a sufficient condition for \textbf{Case 1} to work, and we will show later in our proof of \textbf{Case 2} that this condition with the variables $r$ and $s$ switched holds in order to reduce \textbf{Case 2} to \textbf{Case 1}. For $$n\ge k\floor{\frac{q-r}{r+s}+\frac{rsk}{(r+s)^2}}+\frac{sk}{r+s}+\frac{r}{s}$$ we have
    \begin{equation}\label{rs-fact2}
    m\ge \floor{\frac{q-r}{r+s}+\frac{rsk}{(r+s)^2}}.
    \end{equation}
    We split into two cases.
    \begin{enumerate}
        \item[\textbf{Case 1.1.}] \textit{We have $$m = \floor{\frac{q-r}{r+s}+\frac{rsk}{(r+s)^2}}.$$}
        
        In this case we see that $a\ge \frac{sk}{r+s}+\frac{r}{s},$ otherwise $m$ as defined above would be greater. We note that
        \begin{align*}
            f(B') &\ge (r+s)(m+1)-q
            \\&> (r+s)\bigp{\frac{q-r}{r+s}+\frac{rsk}{(r+s)^2}} - q
            \\&= \frac{rsk}{r+s}-r
            \\&\ge 0.
        \end{align*}
        Applying Proposition \ref{rs-claim1} yields that
        \begin{align*}
            \abs{B'} &\ge \frac{\abs{f(B')}}{s}
            > \frac{\frac{rsk}{r+s}-r}{s}
        \end{align*}
        and
        \begin{align*}
            a &= \abs{R}
            = k - \abs{B'}
            < \frac{sk}{r+s}+\frac{r}{s},
        \end{align*}
        which contradicts $a\ge \frac{sk}{r+s}+\frac{r}{s}$ above. We conclude the existence of a zero-sum $k$-block.
        
        \item[\textbf{Case 1.2.}] \textit{We have $$m\ge \floor{\frac{q-r}{r+s}+\frac{rsk}{(r+s)^2}}+1.$$}
        
        We can compute that
        \begin{align*}
            f(R) &\le q-(r+s)m
            \\&< q- (r+s)\bigp{\frac{q-r}{r+s}+\frac{rsk}{(r+s)^2}}
            \\&= r - \frac{rsk}{r+s}
            \\&\le 0,
        \end{align*}
        and that
        \begin{align*}
            f(B') &\ge (r+s)(m+1)-q
            > (r+s)m-q
            > 0.
        \end{align*}
        Now apply Proposition \ref{rs-claim1} to get
        \begin{align*}
            \abs{B'} &\ge \frac{\abs{f(B')}}{s}
            \ge \frac{(r+s)(m+1)-q}{s} \\
            \abs{R} &\ge \frac{\abs{f(R)}}{r}
            \ge \frac{(r+s)m-q}{r} \\
            \abs{B^*} &= \abs{B'} + \abs{R}
            \\&\ge \frac{(r+s)(m+1)-q}{s} + \frac{(r+s)m-q}{r}
            \\&= \frac{(r+s)^2}{rs} m + \frac{(r+s)(r-q)}{rs}
            \\&> \frac{(q-r)(r+s)}{rs}+ k+ \frac{(r+s)(r-q)}{rs}
            \\&= k,
        \end{align*}
        which contradicts the fact that $\abs{B^*} = k.$ We conclude the existence of a zero-sum $k$-block.
    \end{enumerate}
    
    \item[\textbf{Case 2.}] \textit{We have $f(S)\le -r-s$ for all $k$-blocks $S.$}
    
    We reduce this case to \textbf{Case 1}. Negate every term of the sequence $f(1),f(2),\ldots,f(n)$ by defining the function $g:[n]\to \{-s,r\}$ such that $g(i) = -f(i).$ Note that $g(\abs{n}) = -f(\abs{n}),$ implying that $\abs{g([n])}\le q.$ Note also that all $k$-blocks $S\subeq [n]$ satisfy $g(S)\ge r+s.$ Finally, we see that
    \begin{align*}
        n &\ge \max\bigc{k\floor{\frac{q-r}{r+s}+\frac{rsk}{(r+s)^2}}+\frac{sk}{r+s}+\frac{r}{s},k\floor{\frac{q-s}{r+s}+\frac{rsk}{(r+s)^2}}+\frac{rk}{r+s}+\frac{s}{r}}
        \\&\ge k\floor{\frac{q-s}{r+s}+\frac{rsk}{(r+s)^2}}+\frac{rk}{r+s}+\frac{s}{r},
    \end{align*}
    meaning that we have reduced this case to \textbf{Case 1}.

\end{enumerate}

Having analyzed both cases, we conclude the existence of a zero-sum $k$-block. This completes the proof of Theorem \ref{rs-theorem}.
\end{proof}

One can now see that Theorem \ref{rs-theorem} implies Conjecture \ref{rs-conjecture} with constant $c(r,s)\approx \frac{\abs{r-s}}{r+s}$ by setting $q=0.$

A natural question to ask is how tight our bound is for zero-sum $k$-blocks in zero-sum sequences. Caro, Hansberg, and Montejano claim that the general case of $\{-r,s\},$ as opposed to the special case of $\{-1,1\},$ appears to be significantly more complicated not only in terms of characterizing the extremal sequences but also in finding the exact value of $N(r,s,k)$ as defined in Problem \ref{rs-problem}. For example, to exhibit constructions of zero-sum $\{-r,s\}$-sequences that avoid zero-sum $k$-blocks, one has to be mindful of the fact that a sequence of $n$ numbers from $\{-r,s\}$ where $\gcd(r,s)=1$ can only sum to 0 if $r+s$ divides $n.$ In light of this fact, Problem \ref{rs-problem} essentially asks for the minimum value of $N(r,s,k)$ up to a multiple of $r+s.$

We are now ready to prove Theorem \ref{tight-rs-corollary}, which gives the exact value of $N(r,s,k).$ We will first show the lower bound for $N(r,s,k)$ using a construction in Lemma \ref{rs-construction} and then show that this bound is tight. The setup will be relatively prime positive integers $r$ and $s$ and a function $f:[n]\to \{-r,s\}.$ Let us assume that $r\neq s$ because otherwise $r=s=1,$ and this case is already established in \cite{CHM}.

\begin{lemma}\label{rs-construction}
Let $r$, $s$, and $k$ be positive integers such that $\gcd(r,s)=1$ and $r+s$ divides $k.$ Let $t\in [0, r+s-1]$ be the unique integer such that $\frac{sk}{r+s}-1+t\equiv 0\bmod r+s.$ Then there exists a function $f:[n]\to \{-r,s\}$ with $n = N_k(r,s,t)-1$ such that $f([n]) = 0$ and $f(B)\neq 0$ for any $k$-block $B\subeq [n].$
\end{lemma}

\begin{proof}
We first describe the constructions of $f$ in each of the two cases $t\le r$ and $t>r$ and prove that in each case we have $f([n])=0.$
\begin{enumerate}

    \item[\textbf{Case 1.}] \textit{We have $t\le r$.}
    
    Split the range $[n]$ into $$b = \frac{rsk}{(r+s)^2} - \frac{r+st}{r+s}$$ disjoint consecutive $k$-blocks and a remainder block of length $$\frac{sk}{r+s}-1+t < \frac{sk}{r+s} + \frac{rk}{r+s} = k.$$ For each of the $b$ $k$-blocks, set the first $\frac{sk}{r+s}-1$ terms to $-r$ and the last $\frac{rk}{r+s}+1$ terms to $s.$ We can verify that the sum of the terms in each block is $$\bigp{\frac{k}{r+s}s-1}(-r) + \bigp{\frac{k}{r+s}r+1}s = r+s.$$ Setting the first $\frac{sk}{r+s}-1$ terms of the remainder block to $-r$ and the last $t$ terms to $s,$ we compute
    \begin{align*}
        f([n]) &= (r+s)b - r\bigp{\frac{sk}{r+s}-1} + st
        \\&= (r+s)\bigp{\frac{rsk}{(r+s)^2} - \frac{r+st}{r+s}} - r\bigp{\frac{sk}{r+s}-1} + st
        \\&= 0.
    \end{align*}
    
    \item[\textbf{Case 2.}] \textit{We have $t>r$.}
    
    Proceed with the same construction as in \textbf{Case 1} with $$b = \frac{rsk}{(r+s)^2} - \frac{r+r(r+s-t)}{r+s}$$ until the step that involves setting the terms of the remainder block. Instead, set all $$\frac{sk}{r+s}-1-(r+s-t) < k$$ terms of the remainder block to $-r.$ We compute
    \begin{align*}
        f([n]) &= (r+s)b - r\bigp{\frac{sk}{r+s}-1-(r+s-t)}
        \\&= (r+s)\bigp{\frac{rsk}{(r+s)^2} - \frac{r+r(r+s-t)}{r+s}} - r\bigp{\frac{sk}{r+s}-1-(r+s-t)}
        \\&= 0.
    \end{align*}
\end{enumerate}
Having analyzed both cases, we conclude that $f([n]) = 0.$

Now we show that in both constructions there is no $k$-block $B\subeq [n]$ such that $f(B) = 0.$ In fact, we make the following claim:
    \begin{claim}\label{k-block-sum}
    Every $k$-block $B\subeq [n]$ has $f(B) \ge r+s.$
    \end{claim}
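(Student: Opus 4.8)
The plan is to recognize that, in both cases of the construction, the sequence $f$ on $[n]$ is nothing but the initial segment of a single $k$-periodic pattern, and then to invoke the elementary observation that in a $k$-periodic sequence every window of $k$ consecutive terms has the same sum. Concretely, I would set $p=\frac{sk}{r+s}-1$ and $q=\frac{rk}{r+s}+1$, so that $p+q=k$, and let $g$ be the $k$-periodic sequence on the positive integers whose single period reads $-r$ in its first $p$ positions and $s$ in its last $q$ positions. A direct computation gives that one period of $g$ sums to $-rp+sq=r+s$, exactly the value appearing in the claim.

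First I would check that $f$ and $g$ agree on all of $[n]$. On the $b$ full $k$-blocks this is immediate, since each such block is by construction the string $(-r)^p(s)^q$, which is precisely one period of $g$. The only place a case distinction enters, and the main (if modest) obstacle, is verifying that the remainder block $R$ is consistent with the periodic extension of $g$. In the case $t\le r$ the remainder block is $(-r)^p(s)^t$; since $q=\frac{rk}{r+s}+1\ge r+1>r\ge t$, this is exactly the length-$(p+t)$ prefix of one period of $g$, and its length $p+t=\frac{sk}{r+s}-1+t<k$ keeps it inside a single period. In the case $t>r$ the remainder block is all $-r$ of length $p-(r+s-t)<p$, which is again a prefix of a period of $g$. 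In either case $f$ coincides with $g$ on $\{1,\ldots,n\}$.

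Finally I would conclude. Any $k$-block $B\subeq[n]$ consists of $k$ consecutive positions $i,i+1,\ldots,i+k-1$, all lying in $[n]$; these realize each residue class modulo $k$ exactly once, and the value $g(x)$ depends only on $x \bmod k$. Hence $f(B)=\sum_{x=i}^{i+k-1}g(x)$ equals the sum over one full period, namely $r+s$, which proves the claim. Since $r+s\neq 0$, this simultaneously shows $f(B)\neq 0$ for every $k$-block $B$, completing the proof of Lemma \ref{rs-construction}. The whole argument thus reduces to the periodicity fact together with the bookkeeping check that the remainder block never breaks the period, which is where the constraints on $t$ are used.
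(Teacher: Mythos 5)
Your proof is correct, and it takes a genuinely different (and in one respect stronger) route than the paper's. The paper argues by counting: it observes that the runs of $-r$'s have length at most $\frac{sk}{r+s}-1$ and are separated by at least $\frac{rk}{r+s}+1$ terms equal to $s$, so any window of $k$ consecutive terms contains at most $\frac{sk}{r+s}-1$ terms equal to $-r$, whence $f(B)\ge -r\bigl(\frac{sk}{r+s}-1\bigr)+s\bigl(\frac{rk}{r+s}+1\bigr)=r+s$; as written this only establishes the lower bound $f(B)\ge r+s$, which is all the lemma needs. You instead identify $f$ as the restriction to $[n]$ of the $k$-periodic sequence with period $(-r)^{p}(s)^{q}$, $p=\frac{sk}{r+s}-1$, $q=\frac{rk}{r+s}+1$, verify in both cases that the remainder block is a prefix of that period (using $t\le r<q$ in the first case and $r+s-t>0$ in the second), and then invoke the fact that every $k$-window of a $k$-periodic sequence meets each residue class modulo $k$ exactly once and hence has the same sum, namely the period sum $-rp+sq=r+s$. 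This buys you the exact equality asserted in the claim, not merely the inequality, at the cost of the small bookkeeping check that the remainder never breaks the period; the paper's counting argument is marginally more robust (it would survive perturbations of the remainder block that shorten the $-r$-runs) but proves less than it states. Both arguments rest on the same structural feature of the construction, so either is acceptable here.
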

    To prove this claim, note that in both constructions in the proof of Lemma \ref{rs-construction}, the terms equal to $-r$ form disjoint blocks of length at most $\frac{sk}{r+s}-1.$ Furthermore, any two consecutive such blocks are separated by at least $\frac{rk}{r+s}+1$ terms equal to $s.$ Therefore, among any $$\frac{sk}{r+s}-1 + \frac{rk}{r+s}+1 = k$$ consecutive terms, there are at most $\frac{sk}{r+s}-1$ terms equal to $-r.$ We conclude that any $k$-block has weight at least $$-r\bigp{\frac{sk}{r+s}-1} + s\bigp{\frac{rk}{r+s}+1} = r+s.$$
    Thus this claim holds, finishing the proof of Lemma \ref{rs-construction}.
    \end{proof}

\begin{corollary}\label{rs-construction-corollary}
Let $r$, $s$, and $k$ be positive integers with $\gcd(r,s)=1.$ Let $t'\in [0, r+s-1]$ be the unique integer such that $\frac{rk}{r+s}-1+t'\equiv 0\bmod r+s.$ Then there exists a function $f:[n]\to \{-r,s\}$ with $n = N_k(s,r,t')-1$ such that $f([n]) = 0$ and $f(B)\neq 0$ for any $k$-block $B\subeq [n].$
\end{corollary}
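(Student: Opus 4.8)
The plan is to deduce Corollary \ref{rs-construction-corollary} directly from Lemma \ref{rs-construction} by the symmetry that swaps the roles of $r$ and $s$ together with a global sign change, rather than redoing the construction from scratch. The key observation is that the data defining the corollary are precisely the data defining Lemma \ref{rs-construction} after the substitution $(r,s)\mapsto(s,r)$: the quantities $\frac{rsk}{(r+s)^2}$ and $r+s$ are symmetric in $r$ and $s$, so swapping turns the congruence $\frac{sk}{r+s}-1+t\equiv 0\bmod r+s$ into $\frac{rk}{r+s}-1+t'\equiv 0\bmod r+s$, turns the case threshold $t\le r$ into $t'\le s$, and carries each branch of the piecewise formula for $n$ to exactly the corresponding branch displayed in the corollary.

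First I would apply Lemma \ref{rs-construction} with the pair $(r,s)$ replaced by $(s,r)$; since the hypotheses $\gcd(r,s)=1$ and positivity of $r,s,k$ (as well as the implicit requirement that $r+s$ divide $k$, which is what makes the block lengths integral) are all invariant under the swap, the lemma applies and produces a function $g:[n]\to\{-s,r\}$ on an interval of the stated length with $g([n])=0$ and, by the claim established inside that lemma, $g(B)=r+s$ for every $k$-block $B\subeq[n]$. Next I would set $f=-g$. Under negation each value $-s$ becomes $s$ and each value $r$ becomes $-r$, so $f$ takes values in $\{-r,s\}$ as required; moreover $f([n])=-g([n])=0$, and for every $k$-block $B$ we have $f(B)=-g(B)=-(r+s)\neq 0$. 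Thus $f$ is zero-sum yet admits no zero-sum $k$-block, which is the assertion of the corollary.

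The only real content is the bookkeeping in the first paragraph, and this is the step I expect to require the most care, though it remains routine: one must verify that after the substitution $(r,s)\mapsto(s,r)$ the length $n$ emerging from Lemma \ref{rs-construction} agrees symbolically with the $n$ in the corollary in both cases, with the integer $t$ for the swapped pair correctly identified with $t'$. Because the proof of Lemma \ref{rs-construction} nowhere uses any asymmetry between $r$ and $s$ (for instance it never assumes $r<s$), no separate construction is needed, and the symmetry together with the single negation completes the proof.
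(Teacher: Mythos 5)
Your proposal is correct and matches the paper's own proof exactly: the paper also obtains the corollary by invoking Lemma \ref{rs-construction} with the roles of $r$ and $s$ swapped (which turns $t$ into $t'$ and each branch of the length formula into the corresponding branch in the corollary) to get $g:[n]\to\{-s,r\}$, and then sets $f=-g$. Your additional bookkeeping remarks about the symmetry of $\frac{rsk}{(r+s)^2}$ and $r+s$ are a fine, slightly more explicit version of what the paper leaves implicit.
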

\begin{proof}
By Lemma \ref{rs-construction} there exists a function $g:[n]\to \{-s,r\}$ where $n = N_k(s,r,t)-1$ such that $g([n]) = 0$ and $g(B)\neq 0$ for any $k$-block $B\subeq [n].$ Let $f:[n]\to \{-r,s\}$ be defined such that $f(i) = -g(i)$ for all $i\in [n].$ Then it is easy to see that $f([n]) = 0$ and $f(B)\neq 0$ for any $k$-block $B\subeq [n]$ as well.
\end{proof}

We now prove Theorem \ref{tight-rs-corollary} in Section \ref{introduction}, which shows that the constructions given in Lemma \ref{rs-construction} and Corollary \ref{rs-construction-corollary} are tight.

\begin{proof}[Proof of Theorem \ref{tight-rs-corollary}]
Note that $N(r,s,k)\ge k$ because a sequence of length less than $k$ does not even contain a $k$-block, let alone a zero-sum $k$-block. Lemma \ref{rs-construction} and Corollary \ref{rs-construction-corollary} show the lower bounds $N(r,s,k)\ge N_k(r,s,t)$ and $N(r,s,k)\ge N_k(s,r,t')$ respectively. Hence it suffices to prove that for all functions $f:[n]\to \{-r,s\}$ with $$n\ge \max\{k, N_k(r,s,t), N_k(s,r,t')\}$$ and $f([n]) = 0,$ there exists a $k$-block $B\subeq [n]$ for which $f(B) = 0.$

Fix such a $n$ satisfying $$n\ge \max\{k, N_k(r,s,t), N_k(s,r,t')\}$$ and assume for the sake of contradiction that all $k$-blocks $B\subeq [n]$ satisfy $f(B)\neq 0.$ By Lemma \ref{interpolation-lemma}, either $f(B)\ge r+s$ for all $B$ or $f(B)\le -r-s$ for all $B.$

\begin{enumerate}
    \item[\textbf{Case 1.}] \textit{We have $f(B)\ge r+s$ for all $k$-blocks $B.$}
    
    The conditions $f([n])=0$ and $\gcd(r,s)=1$ imply that $r+s$ divides $n,$ so in fact $$n\ge N_k(r,s,t)-1+r+s.$$ Split the range $[n]$ into $b$ disjoint consecutive $k$-blocks starting from the first term and a remainder block $R$ consisting of $a = \abs{R}$ terms, where $0\le a\le k-1.$ From the hypothesis on $n$ we see that $$b \ge \begin{cases}\frac{rsk}{(r+s)^2} - \frac{r+st}{r+s} & t\le r \\ \frac{rsk}{(r+s)^2} - \frac{r+r(r+s-t)}{r+s} & t> r.\end{cases}$$ Since each of the $b$ $k$-blocks has weight at least $r+s,$ we see that the weight of the first $bk$ terms is at least $b(r+s),$ implying that $f(R)\le -b(r+s).$ We also know that because $r+s$ divides $k$ that the weight of each of the $b$ $k$-blocks is divisible by $r+s,$ implying that $f(R)$ is also divisible by $r+s$ because $r+s$ divides $f([n]) = 0.$ Since $-r,s\equiv s\bmod r+s$ we see that $f(R)\equiv sa\bmod r+s.$ The fact that $\gcd(r,s) = 1$ means that $\gcd(s,r+s)=1$ as well, implying that $r+s$ divides $a.$
    
    If there are at least $\frac{sk}{r+s}$ terms equal to $-r$ in $R$, then there would be at least $\frac{sk}{r+s}$ in the rightmost $k$-block $B^* = [n-k+1,n]\subset [n],$ implying that
    \begin{align*}
        f(B^*) \le -r\bigp{\frac{sk}{r+s}} + s\bigp{k - \frac{sk}{r+s}} \le 0,
    \end{align*}
    which is a contradiction. Hence there are at most $\frac{sk}{r+s}-1$ terms equal to $-r$ in $R.$
    
    \begin{enumerate}
        \item[\textbf{Case 1.1.}] \textit{We have $t\le r.$}
        
        We split into two subcases.
    
    \begin{enumerate}
        \item[\textbf{Case 1.1.1.}] \textit{We have $$b = \frac{rsk}{(r+s)^2} - \frac{r+st}{r+s}.$$}
        
        In this subcase we must have $$a=\frac{sk}{r+s}-1+t +r+s< k.$$ There are at most $\frac{sk}{r+s}-1$ terms equal to $-r$ in $R,$ implying that
        \begin{align*}
            f(R) &\ge -r\bigp{\frac{sk}{r+s}-1} + s\bigp{t+r+s}
            \\&= -\frac{rsk}{r+s} + r+st+s(r+s)
            \\&> -\frac{rsk}{r+s} + r+st
            \\&= -b(r+s),
        \end{align*}
        contradicting $f(R)\le -b(r+s).$ We conclude the existence of a zero-sum $k$-block in $[n].$
        
        \item[\textbf{Case 1.1.2.}] \textit{We have $$b \ge \bigp{\frac{rsk}{(r+s)^2} - \frac{r+st}{r+s}} + 1.$$}
        
        In this subcase we in fact have $$b = \bigp{\frac{rsk}{(r+s)^2} - \frac{r+st}{r+s}} + 1$$ and $$a = \frac{sk}{r+s}-1+t+r+s-k.$$ This is because $$\frac{sk}{r+s}-1+t+r+s-k<k,$$ so if $b$ is any larger then $a$ would be negative. By Proposition \ref{rs-claim1}, we see that the number of terms in $R$ equal to $-r$ is at least $$\frac{b(r+s)}{r} \ge \frac{\frac{rsk}{r+s}-st+s}{r}= \frac{sk}{r+s}+\frac{s-st}{r}.$$
        Hence we must have
        \begin{align*}
            a &\ge \frac{sk}{r+s} +\frac{s-st}{r} \\
            -1 + t + r + s - k &\ge \frac{s-st}{r} \\
            t+r-1 &\ge \frac{rk}{r+s} \\
            k &< 2(r+s),
        \end{align*}
        implying that $k=r+s$ because $r+s$ divides $k.$ This means that $a=s-1+t.$ We know that there are at most $$\frac{sk}{r+s}-1=s-1$$ terms equal to $-r$ in $R$, meaning that
        \begin{align*}
            f(R) &\ge -r(s-1) + st
            \\&= -rs + r + st
            \\&> -rs - s + st
            \\&= -b(r+s),
        \end{align*}
        contradicting $f(R)\le -b(r+s).$ We conclude the existence of a zero-sum $k$-block in $[n].$
    
    \end{enumerate}
    \item[\textbf{Case 1.2.}] \textit{We have $t> r.$}
    
    In this case we have $$b \ge \frac{rsk}{(r+s)^2} - \frac{r+r(r+s-t)}{r+s}.$$ If this inequality is strict, then this implies that
    \begin{equation}\label{ask}
        a\le \frac{sk}{r+s} - 1 + t - k.
    \end{equation}
    By Proposition \ref{rs-claim1} we see that the number of terms in $R$ equal to $-r$ is at least
    \begin{align*}
        \frac{b(r+s)}{r} &\ge \frac{sk}{r+s} +\frac{s}{r}- (r+s-t).
    \end{align*}
    So we have the inequality
    \begin{align*}
        a &\ge \frac{sk}{r+s} +\frac{s}{r}- (r+s-t).
    \end{align*}
    Combining with \eqref{ask}, we obtain $$k \le r+s - \frac{s}{r} - 1 < r+s.$$
    which is a contradiction. Hence $$b = \frac{rsk}{(r+s)^2} - \frac{r+r(r+s-t)}{r+s}$$ and $$a = \frac{sk}{r+s} - 1 + t.$$ We compute
    \begin{align*}
        f(R) &\ge -r\bigp{\frac{sk}{r+s} - 1} + st
        \\&= -\frac{rsk}{r+s} + r + (r+s)t - rt
        \\&> -\frac{rsk}{r+s} + r + r(r+s-t)
        \\&= -b(r+s),
    \end{align*}
    contradicting $f(R) \le -b(r+s).$ We conclude the existence of a zero-sum $k$-block in $[n].$
    \end{enumerate}
    
    \item[\textbf{Case 2.}] \textit{We have $f(B)\le -r-s$ for all $k$-blocks $B.$}
    
    As in the proof of Theorem \ref{rs-theorem}, we will reduce this case to \textbf{Case 1}. Construct a function $g:[n]\to \{-s,r\}$ such that $g(i) = -f(i)$ for all $i\in [n].$ Note that any $k$-block $B\subeq [n]$ has $g(B)\ge r+s,$ so $g$ satisfies the conditions of \textbf{Case 1}. By \textbf{Case 1}, there is a zero-sum $k$-block in the sequence defined by $g$ for any $n\ge N_k(s,r,t'),$ where $t'\in [0, r+s-1]$ is the unique integer such that $\frac{rk}{r+s} - 1 + t' \equiv 0\bmod r+s.$ Hence there is a zero-sum $k$-block in the sequence defined by $f$ for $n\ge N_k(s,r,t').$
\end{enumerate}

Having reached a contradiction in both cases with the assumption that all $k$-blocks $B\subeq [n]$ satisfy $f(B)\neq 0,$ we conclude the existence of a zero-sum $k$-block for $$n\ge \max\{k, N_k(r,s,t), N_k(s,r,t')\}.$$ This completes the proof of Theorem \ref{tight-rs-corollary}.
\end{proof}

\begin{remark}
Theorem \ref{tight-rs-corollary} precisely determines the constant $N(r,s,k),$ thus resolving Problem \ref{rs-problem} in Section \ref{introduction}. Note that the upper bound $$N(r,s,k)\le \max\{k, N_k(r,s,t), N_k(s,r,t')\}$$ in Theorem \ref{tight-rs-corollary} does not follow from Theorem \ref{rs-theorem} despite some similarities in their proofs. Nevertheless, for some specific values of $r$, $s$, and $k,$ one can verify that the two lower bounds for $n$ to guarantee a zero-sum $k$-block are the same in both theorems.
\end{remark}

Finally, we extend Theorem \ref{rs-theorem} to cover infinite sequences, generalizing Theorem \ref{infinite-theorem} which deals with the $\{-1,1\}$ case. The proof is entirely analogous.

\begin{theorem}\label{infinite-rs-theorem}
Let $f:\ZZ^+\to \{-r,s\}$ be a function such that $\abs{f([n])} = o(n)$ when $n\to \infty.$ Then for every $k$ such that $r+s$ divides $k,$ there are infinitely many zero-sum $k$-blocks.
\end{theorem}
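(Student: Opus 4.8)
The plan is to reduce this infinite statement to the finite bound of Theorem \ref{rs-theorem} by applying it to windows that are pushed arbitrarily far to the right. It suffices to show that for every positive integer $N$ there is a zero-sum $k$-block all of whose elements exceed $N$; iterating this, each time replacing $N$ by the right endpoint of the block just produced, yields infinitely many such blocks. So I would fix $N$ and, for $m>N$, consider the window $[N+1,m]$ reindexed by the function $\tilde f\colon [m-N]\to\{-r,s\}$ with $\tilde f(i)=f(N+i)$. A zero-sum $k$-block for $\tilde f$ corresponds to a zero-sum $k$-block for $f$ whose smallest element is greater than $N$, so it is enough to produce one for $\tilde f$ for some large $m$.

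Next I would control the weight of the window. Writing $L=m-N$ for its length, we have
$$\tilde f([L]) = f([N+1,m]) = f([m]) - f([N]),$$
so $\abs{\tilde f([L])}\le \abs{f([m])}+\abs{f([N])}$. By hypothesis $\abs{f([m])}=o(m)$, while $\abs{f([N])}$ is a constant independent of $m$; since $L=m-N$ grows linearly in $m$, this gives $\abs{\tilde f([L])}=o(L)$. Set $q_m=\abs{\tilde f([L])}$.

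Finally I would invoke Theorem \ref{rs-theorem} with parameters $r,s,k$ and $q=q_m$ (noting $r+s\mid k$ is part of the hypothesis). Dropping the floor functions, the first threshold appearing there is at most
$$k\bigp{\frac{q_m}{r+s}+\frac{rsk}{(r+s)^2}}+\frac{sk}{r+s}+\frac{r}{s} = \frac{k}{r+s}\,q_m + C(r,s,k),$$
where $C(r,s,k)$ does not depend on $m$ once $k$ is fixed, and the second expression in the maximum admits an identical bound. Since $q_m=o(L)$, the linear term $\frac{k}{r+s}q_m$ is $o(L)$, so for all sufficiently large $m$ we have both $L\ge k$ and $L$ exceeding this threshold. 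Theorem \ref{rs-theorem} then produces a zero-sum $k$-block inside $[L]$, hence the desired block beyond $N$.

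The only point requiring care — and the main, if minor, obstacle — is verifying that the finite threshold is genuinely linear in $q$ with an additive term that is constant in $q$ (hence in $m$) for fixed $k$; this is precisely what the displayed estimate records, and it is what allows a sublinear total weight $o(L)$ to fall below the threshold for large windows. The remainder is just translation invariance and the bookkeeping of sliding the window rightward.
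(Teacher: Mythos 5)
Your argument is correct, but it is not the route the paper takes. The paper's proof is essentially a citation: it observes that the proof of Theorem 2.10 in \cite{CHM} (the $\{-1,1\}$ infinite statement) goes through verbatim for $\{-r,s\}$ once one sentence is replaced by ``if $f(B_i)>0$ for a $k$-block $B_i$ then in fact $f(B_i)\ge r+s\ge 2$'' --- i.e.\ it adapts a direct argument about the weights of $k$-blocks in the infinite sequence, exploiting the gap (Fact \ref{rsk-fact}) that nonzero block weights have absolute value at least $r+s$. You instead give a black-box reduction to the finite Theorem \ref{rs-theorem}: slide a window $[N+1,m]$ to the right, check that the restricted function has total weight $q_m=o(L)$ where $L=m-N$, note that the threshold in Theorem \ref{rs-theorem} is at most $\frac{k}{r+s}q+C(r,s,k)$ and hence eventually below $L$, and iterate to get infinitely many blocks. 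All the steps check out: the translation bookkeeping, the estimate $\abs{\tilde f([L])}\le\abs{f([m])}+\abs{f([N])}=o(L)$, and the linearity in $q$ of the bound after dropping the floors are all fine, and the hypotheses of Theorem \ref{rs-theorem} ($r+s\mid k$, $q\ge 0$, $L\ge k$) are met for large $m$. What your version buys is self-containment within this paper --- the reader never has to open \cite{CHM} --- at the cost of being longer than the paper's one-line adaptation; what the paper's version buys is brevity, and its underlying direct argument (all sufficiently remote $k$-blocks would have weight $\ge r+s$, forcing $f([n])$ to grow linearly) is the same mechanism your citation of Theorem \ref{rs-theorem} ultimately packages.
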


\begin{proof}
Fix $n_0>1$ satisfying $n_0\equiv 1\bmod k$ and consider $$n>(k\cdot \max\{r,s\} + 2)(n_0-1)\ge n_0$$ such that $n\equiv 0\bmod k$ and $\abs{f([n])}\le \frac{n}{k}.$ Note that such a $n$ exists because $\abs{f([n])} = o(n)$ when $n\to \infty.$ Consider the partition of $[n_0, n]$ into $h = \frac{n-n_0+1}{k}$ disjoint $k$-blocks $B_1,B_2,\ldots,B_h.$ Suppose that $f(B_i)\neq 0$ for all $1\le i\le h.$ We know by Lemma \ref{interpolation-lemma} that if $f(B_i)<0<f(B_j)$ for some $1\le i,j\le h$ then there will be a zero-sum $k$-block. Hence we can assume without loss of generality that all blocks $B_i$ satisfy $f(B_i)\ge 1,$ and hence $f(B_i)\ge r+s\ge 2.$ Then we have
\begin{align*}
    2h
    &\le \sum_{i=1}^h f(B_i)
    \\&\le \abs{\sum_{j=n_0}^n f(j)}
    \\&\le \abs{f([n])} + \max\{r,s\}\cdot (n_0 - 1)
    \\&\le \frac{n}{k} + \max\{r,s\}\cdot (n_0 - 1).
\end{align*}
This implies
\begin{align*}
    2\bigp{\frac{n-n_0+1}{k}}&\le \frac{n}{k} + \max\{r,s\}\cdot (n_0 - 1).
\end{align*}
Rearranging yields
\begin{align*}
    \frac{n}{k}\le \max\{r,s\}\cdot (n_0 - 1) + \frac{2(n_0-1)}{k} &= \frac{(k\cdot \max\{r,s\} + 2)(n_0-1)}{k},
\end{align*}
a contradiction to $$n> (k\cdot \max\{r,s\} + 2)(n_0-1).$$

Since $n_0$ is fixed but can be arbitrarily large, we can thus build a sequence $\{n_0,n_1,\ldots\}$ using the above procedure\footnote{Let $n_1$ be the number $n$ produced by the procedure. Then we repeat the procedure with $n_1$ in place of $n_0$ to produce $n_2,$ and so on.} such that in each of the intervals $[n_j,n_{j+1}]$ for $j\ge 0$ there is a zero-sum $k$-block. Hence there are infinitely many zero-sum $k$-blocks.
\end{proof}



\section{Avoidance of zero-sum arithmetic subsequences in $\{-1,1\}$-sequences}\label{arithmetic}

In this section and the next we derive lower bounds for the constants $M(r,s,k)$ in Conjecture \ref{quadratic-conjecture} depending on $r$, $s$, and $k.$ The constant $M(r,s,k)$ exists because bounds obtained for the case of zero-sum $k$-blocks serve as upper bounds for $M(r,s,k).$ In other words, we have $$M(r,s,k)\le N(r,s,k).$$ We illustrate our method by first dealing with the special case $r=s=1$ which is probably of most interest since \cite{CHM} deals exclusively with $\{-1,1\}$-sequences. For number theoretic reasons, we can prove our construction yields a quadratic lower bound for $n$ in terms of $k$ only for $(r,s)=(1,1),(1,2),$ and possibly a finite number of other pairs of positive integers $(r,s).$ Nevertheless, our construction yields, for sufficiently large $k,$ a superlinear bound for arbitrary $(r,s)$ as described in Theorem \ref{not-quadratic-theorem} in Section \ref{superlinear}.\footnote{A linear lower bound for $n$ in terms of $k$ is trivial since a sequence of length $n\le k-1$ does not even contain a subsequence of length $k.$}

In order to prove a quadratic lower bound on $M(1,1,k),$ we need to construct a zero-sum sequence of $-1$'s and $1$'s that does not have a zero-sum subsequence indexed by a $k$-term arithmetic progression. One idea to construct such a function $f:[n]\to \{-1,1\}$ is to let the sign of $f(j)$ depend only on the residue of $j$ modulo $k.$ The reason is that this choice results in a nice structure of $k$-term arithmetic progressions when we consider the multiset of residues of the $k$ terms modulo $k$; see Proposition \ref{proposition-residue}. However, it turns out that this construction only yields a quadratic bound for $k\equiv 2\bmod 4$ and so is relegated to the Appendix. In this section we obtain both a better bound and one that holds for all even $k.$ In this construction, the sign of $f(j)$ depends on the residue of $j$ mod $k+1$ rather than $k.$ 

We begin our study on zero-sum arithmetic progressions in $\{-1,1\}$-sequences with a fairly obvious proposition.

\begin{proposition}\label{proposition-residue}
Let $A$ be a $k$-term arithmetic progression of integers with common difference $d.$ Consider the multiset $S$ of the residues of $A$ modulo $k.$ Then the following properties are true:
\begin{enumerate}
    \item The distinct elements of the multiset form an arithmetic progression $A'\subeq [0, k-1]$ with $\frac{k}{\gcd(d,k)}$ terms and common difference $\gcd(d,k).$
    \item The multiplicity of every element of $S$ is $\gcd(d,k).$
\end{enumerate}
\end{proposition}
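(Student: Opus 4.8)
The plan is to analyze the reduction map $j \mapsto a + jd \pmod k$ over a complete residue system, exploiting the subgroup structure of the cyclic group $\ZZ/k\ZZ$. Write $A = \{a, a+d, \ldots, a+(k-1)d\}$ and set $g = \gcd(d,k)$, so that $k = gk'$ and $d = gd'$ with $\gcd(d',k') = 1$. Since the index $j$ ranges over $\{0,1,\ldots,k-1\}$, a complete residue system modulo $k$, the multiset $S$ is precisely $\{\, a + jd \bmod k : 0 \le j < k \,\}$, and everything reduces to understanding how often each value is attained.

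First I would count coincidences among the indices. For $0 \le j_1, j_2 < k$ we have $a + j_1 d \equiv a + j_2 d \pmod k$ if and only if $(j_1 - j_2)d \equiv 0 \pmod k$, and since $d = gd'$ with $\gcd(d', k') = 1$, this is equivalent to $k' \mid (j_1 - j_2)$. Hence the indices split into exactly $k'$ residue classes modulo $k'$, each of size $g$; within a class the value $a + jd \bmod k$ is constant, while distinct classes give distinct values. This simultaneously shows that $S$ has exactly $k' = k/\gcd(d,k)$ distinct elements, each of multiplicity $g = \gcd(d,k)$, which is part (2). As a consistency check, the total count is $k' \cdot g = k$, as it must be.

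Next I would identify the distinct elements as an arithmetic progression. Taking the representatives $j = 0, 1, \ldots, k'-1$, the distinct residues are $a + jgd' \bmod k$; because $g \mid d$ and $g \mid k$, each such residue is congruent to $a$ modulo $g$. Conversely, the elements of $\{0,1,\ldots,k-1\}$ that are congruent to $a \pmod g$ are exactly $\{r, r+g, \ldots, r+(k'-1)g\}$, where $r = a \bmod g$, and there are precisely $k' = k/g$ of them. Since the $k'$ distinct residues all lie in this set of size $k'$, the two sets coincide. Thus $A' = \{r, r+g, \ldots, r+(k'-1)g\} \subeq \{0,\ldots,k-1\}$ is an arithmetic progression with $k/\gcd(d,k)$ terms and common difference $\gcd(d,k)$, giving part (1).

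There is no serious obstacle here: the statement is a direct consequence of the structure of $\ZZ/k\ZZ$. The only points needing care are to use that $j$ runs over a full residue system, so that each fiber of the map has exactly $g$ elements, and to verify that the $k'$ distinct residues fill out the entire congruence class $a \pmod g$ inside $\{0,\ldots,k-1\}$ rather than merely being contained in it. The latter is settled by the counting argument above, since both sets have size $k'$.
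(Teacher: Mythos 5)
Your proof is correct and complete, but it is organized differently from the paper's. The paper first handles the special case $\gcd(d,k)=1$ (where the progression covers all residues), then reduces the general case to it by translating so the first term is $0$, observing that all residues are multiples of $g=\gcd(d,k)$, and arguing that successive blocks of $k/g$ terms repeat the same residues, which yields the multiplicity count. You instead work directly with the map $j\mapsto a+jd \bmod k$: you characterize exactly when two indices collide (namely $k'\mid j_1-j_2$ with $k'=k/g$, using $\gcd(d/g,k')=1$), which gives the fiber structure and part (2) in one stroke, and then you pin down the image as the full congruence class of $a$ modulo $g$ by a containment-plus-cardinality argument. Your route has two small advantages: it does not need the translation step or a separate coprime special case, and it is more careful on the one delicate point --- the paper justifies surjectivity onto the multiples of $g$ by asserting $\gcd\bigp{d,\tfrac{k}{\gcd(d,k)}}=1$, which is not literally true in general (e.g.\ $d=4$, $k=8$), whereas the correct coprimality is the one you use, $\gcd(d/g,\,k/g)=1$. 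The paper's block-periodicity picture, on the other hand, makes it visually transparent why every residue appears with the same multiplicity. Both arguments are elementary and yield the same statement.
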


\begin{proof}
First consider the special case when $\gcd(d,k)=1.$ It is an elementary fact that since $d$ and $k$ are relatively prime then any $k$-term arithmetic progression with common difference $d$ will cover all $k$ residues modulo $k.$ This implies both of the properties above.

In general, suppose that an arithmetic progression of integers $A$ satisfies both of the properties above. We can see that adding a fixed constant $c$ to every term of $A$ preserves both properties. In particular, each element $s$ of the multiset $S$ will be translated by $c\bmod k.$ Hence by translating $A$ it suffices to prove the properties when $0$ is the first term of $A.$ In this case every element of $A$ is divisible by $d$ and hence also $\gcd(d,k),$ so the only possible residues modulo $k$ are those that are divisible by $\gcd(d,k).$ Consider the first $\frac{k}{\gcd(d,k)}$ terms of $A.$ Because $\gcd\bigp{d, \frac{k}{\gcd(d,k)}}  = 1,$ a scaled argument of the special case above implies that the first $\frac{k}{\gcd(d,k)}$ terms of $A$ cover each residue modulo $k$ divisible by $\gcd(d,k),$ and this set of residues is indeed an arithmetic progression $A'\subeq [0, k-1]$ with $\frac{k}{\gcd(d,k)}$ terms and common difference $\gcd(d,k).$ Because $k$ divides $ \frac{k}{\gcd(d,k)}\cdot d,$ we see that in fact every successive block of $\frac{k}{\gcd(d,k)}$ terms of $A$ will have the same residues modulo $k.$ Because there are $\gcd(d,k)$ blocks in total, we conclude that the multiplicity of every residue that appears is $\gcd(d,k).$
\end{proof}

\begin{lemma}\label{plus-lemma}
Let $a=k+1\ge 3$ be odd and let $f:[0, a-1]\to \{-1,1\}$ be a function mapping residues modulo $a$ to $-1$ or $1$ such that
$$f(j) = \begin{cases} -1 & j\bmod a < \frac{a-3}{2} \\ 1 & j\bmod a \ge \frac{a - 3}{2}.\end{cases}$$ Then the following properties hold:
\begin{enumerate}
    \item The number of $f(j)=1$ is $\frac{a+3}{2}$ and the number of $f(j)=-1$ is $\frac{a-3}{2}.$
    \item Let $A\subeq \ZZ$ be an $a$-term arithmetic progression and let $S$ denote the multiset of residues of $A$ modulo $a.$ Then $$\abs{\sum_{j\in S} f(j)} \ge 3,$$ where the sum is over all $\abs{A}$ residues of the multiset $S,$ not just distinct ones.
    \item Let $B\subeq \ZZ$ be a $k$-term arithmetic progression and let $T$ denote the multiset of residues of $B$ modulo $a.$ Then $$\sum_{j\in T} f(j)\neq 0,$$ where the sum is over all $\abs{B}$ residues of the multiset $T,$ not just distinct ones.
\end{enumerate}
\end{lemma}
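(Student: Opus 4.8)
The plan is to establish the three properties in order, treating property (2) as the crux and obtaining (1) and (3) as short bookends. Property (1) is immediate from the definition: $f(j)=-1$ precisely on the contiguous block $\{0,1,\ldots,\frac{a-5}{2}\}$, which has $\frac{a-3}{2}$ elements, and $f(j)=1$ on the complementary block $\{\frac{a-3}{2},\ldots,a-1\}$, which has $\frac{a+3}{2}$ elements; these are the asserted counts.

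For property (2) the first step is to collapse the multiset sum onto a single coset. Writing $d$ for the common difference of $A$ and $g=\gcd(d,a)$, I would invoke Proposition \ref{proposition-residue} with $a$ in place of $k$ (the statement holds for any modulus): the distinct residues of $A$ modulo $a$ form an arithmetic progression $A'$ of common difference $g$ with $a/g$ terms, each occurring with multiplicity $g$. Hence $\sum_{j\in S}f(j)=g\sum_{j\in A'}f(j)$, where $A'$ is exactly the set of residues in $\{0,\ldots,a-1\}$ in one congruence class mod $g$. Letting $I=\{0,\ldots,\frac{a-5}{2}\}$ be the block on which $f=-1$, of size $L:=\frac{a-3}{2}$, and using that $f=1$ off $I$ and $f=-1$ on $I$, this gives $\sum_{j\in S}f(j)=a-2g\abs{A'\cap I}$.

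The heart of the matter is to pin down $\abs{A'\cap I}$, and this is where I expect the main obstacle to sit. By equidistribution of a congruence class mod $g$ inside an interval, $\abs{A'\cap I}=\floor{L/g}+\epsilon$ with $\epsilon\in\{0,1\}$. Setting $\rho=L\bmod g$, so that $g\floor{L/g}=L-\rho$, and using $a-2L=3$, I would obtain the clean formula $\sum_{j\in S}f(j)=3+2\rho-2g\epsilon$. When $\epsilon=0$ this is $3+2\rho\ge 3$. When $\epsilon=1$ (which forces $\rho\ge 1$) it equals $3+2\rho-2g$, an odd integer strictly below $3$; its absolute value is at least $3$ unless it equals $-1$ or $1$, i.e. unless $\rho\in\{g-2,g-1\}$. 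Since $L+1=\frac{a-1}{2}$ and $L+2=\frac{a+1}{2}$, these bad cases are exactly $g\mid\frac{a-1}{2}$ or $g\mid\frac{a+1}{2}$. The obstacle is resolved by the observation that for $g\mid a$ with $g>1$ neither can hold: $g\mid\frac{a\mp 1}{2}$ would force $g\mid(a\mp 1)$ and hence $g\mid\gcd(a,a\mp 1)=1$. (The extreme $g=a$, where $A$ collapses to one residue, is covered by the same formula and yields $\abs{\sum_S f}=a\ge 3$, and $g=1$ yields exactly $3$.) Thus $\abs{\sum_{j\in S}f(j)}\ge 3$ in all cases.

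Property (3) is then a one-line consequence. Given the $k$-term progression $B$, I would adjoin its next term to form an $a$-term (that is, $(k+1)$-term) arithmetic progression $A$; the residue multiset $T$ of $B$ is obtained from the residue multiset $S$ of $A$ by deleting a single element, so $\sum_{j\in T}f(j)=\sum_{j\in S}f(j)-f(j_0)$ for one residue $j_0$. Since $\abs{\sum_{j\in S}f(j)}\ge 3$ by property (2) and $\abs{f(j_0)}=1$, the triangle inequality gives $\abs{\sum_{j\in T}f(j)}\ge 2>0$, which is the claim.
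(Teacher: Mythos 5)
Your proof is correct, and it shares the paper's skeleton: part (1) is the same count, and part (3) uses the identical trick of extending $B$ by one term to an $a$-term progression and observing that deleting one residue changes the sum by at most $1$, so property (2) with bound $3$ forces the $k$-term sum to be nonzero. Where you genuinely diverge is the crux, property (2). The paper's route is a short divisibility-and-parity observation: with $g=\gcd(d,a)$, if $g=1$ the sum is exactly $3$ by part (1); otherwise $g\ge 3$ because $a$ is odd, Proposition \ref{proposition-residue} makes every multiplicity equal to $g$, so the sum is a multiple of $g$, and it is nonzero because a sum of an odd number $a$ of $\pm 1$'s is odd --- hence its absolute value is at least $g\ge 3$. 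You instead compute the sum exactly, $\sum_{j\in S}f(j)=3+2\rho-2g\epsilon$ with $\rho=L\bmod g$ and $\epsilon\in\{0,1\}$, and exclude the two residues $\rho\in\{g-2,g-1\}$ that would give absolute value $1$ via $\gcd(a,a\pm 1)=1$. Both arguments are sound and your case analysis is complete (including the degenerate $g=a$ and the fact that $\epsilon=1$ forces $\rho\ge 1$). The paper's argument is shorter and needs only the oddness of $a$; yours buys the exact value of the sum, and in fact your formula encodes the paper's mechanism, since $3+2\rho\equiv a\equiv 0\pmod g$ shows your expression is automatically a multiple of $g$, which is why the excluded values $\pm 1$ cannot occur for $g>1$.
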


\begin{proof}
\begin{enumerate}
    \item This is by definition.
    \item By Proposition \ref{proposition-residue} the distinct elements of $S$ form an arithmetic progression $A'\subeq [0, a-1]$ with common difference $d.$ If $\gcd(d,a)=1$ then $A' = [0, a-1],$ and we are done by (1). Otherwise, since $a$ is odd we have $\gcd(d,a)\ge 3.$ Since Proposition \ref{proposition-residue} tells us that $\gcd(d,a)$ is the multiplicity of every element of $S,$ this multiplicity is also at least 3. We conclude that the sum $$\sum_{j\in S} f(j)$$ is divisible by an odd number that is at least 3. Since $a$ is odd this sum cannot be 0, implying the result.
    
    \item Extend $B$ by one term to create an $a$-term arithmetic progression $A$ in $\ZZ.$ Let $S$ denote the multiset of residues of $A$ modulo $a.$ By (2) we have $$\abs{\sum_{j\in S} f(j)} \ge 3.$$ But the multiset $T$ has exactly one more element than $S.$ Since $\abs{f(j)}= 1$ for all $j,$ we conclude that
    \begin{align*}
        \abs{\sum_{j\in S} f(j) - \sum_{j\in T} f(j)} &\le 1 \\
        \abs{\sum_{j\in T} f(j)} &\ge 1.
    \end{align*}
\end{enumerate}
\end{proof}

We are now ready to resolve Conjecture \ref{quadratic-conjecture} for all even $k$ by proving Theorem \ref{quadratic-theorem} from Section \ref{introduction}, which recall states that $$\frac{1}{6}k^2 + O(k) \le M(1,1,k) \le \frac{1}{4}k^2 + O(k)$$ for all even $k.$

\begin{proof}[Proof of Theorem \ref{quadratic-theorem}]
Let $a=k+1$ be odd. Construct a function $f:[0, n-1]\to \{-1,1\}$ for $$n = (a+3)\cdot \floor{\frac{a-3}{6}}$$ by defining $$f(j) = \begin{cases} -1 & j\bmod a < \frac{a-3}{2} \\ 1 & j\bmod a \ge \frac{a-3}{2}.\end{cases}$$

We first verify that the constructed sequence is zero-sum. In each of the first $\floor{\frac{a-3}{6}}$ consecutive blocks of $a$ elements of the sequence there are 3 more $1$'s than $-1$'s, so in total there are $3\floor{\frac{a-3}{6}}$ more $1$'s than $-1$'s in the first $a\cdot \floor{\frac{a-3}{6}}$ terms of the sequence. There are $3\floor{\frac{a-3}{6}}$ remaining terms that are all equal to $-1$ by construction. We conclude that $f([0, n-1]) = 0.$

The fact that any $k$-term arithmetic progression $A\subeq [0, n-1]$ satisfies $f(A)\neq 0$ is a corollary of (3) of Lemma \ref{plus-lemma}.

By our construction, we thus conclude that
\begin{align*}
    M(1,1,k) &\ge (a+3)\cdot \floor{\frac{a-3}{6}}
    \\&= (k+4)\cdot \floor{\frac{k-2}{6}}
    \\&= \frac{1}{6}k^2 +O(k)
\end{align*}
as desired.
\end{proof}

\section{Avoidance of zero-sum arithmetic subsequences in $\{-r,s\}$-sequences}\label{superlinear}

In this section we first generalize the results on zero-sum arithmetic subsequences from $\{-1,1\}$-sequences to general $\{-r,s\}$-sequences. The construction here will use the same idea as the proof of Theorem \ref{quadratic-theorem} of letting $f(j)$ depend on the residue of $j$ modulo some positive integer. However, for arbitrary positive integers $r$ and $s$ we cannot prove a quadratic lower bound in terms of $k$ for $M(r,s,k)$ due to some number-theoretic obstructions.

Recall that we can assume without loss of generality that $\gcd(r,s) = 1$ and that $r \neq s$ since the case $r=s=1$ has already been dealt with in the previous section. We can also assume that $r < s$ since we can negate every term of a $\{-r,s\}$-sequence to produce a $\{-s,r\}$-sequence such that the existence of a zero-sum arithmetic subsequence is preserved. We begin the construction with some definitions and a key lemma comparable to Lemma \ref{plus-lemma}.

\begin{definition}
For a nonnegative integer $\alpha,$ define the set $$\cS_\alpha(r,s) = \{-r\alpha + (r+s)t\mid t\in [0,\alpha]\}$$ to be the set of all possible weights of a $\{-r,s\}$-sequence with length exactly $\alpha.$ Note that $\cS_\alpha(r,s)$ is an arithmetic progression with $\alpha+1$ terms and common difference $r+s.$ 
\end{definition}

\begin{definition}\label{shift-definition}
Fixing an integer $k$ divisible by $r+s,$ we say that $\alpha\ge 0$ is a \emph{good shift} for the tuple $(r,s,k)$ if $k+\alpha$ is relatively prime to every element of $\cS_{\alpha}(r,s).$
\end{definition}

\begin{example}\label{11-example}
In the context of $\{-1,1\}$-sequences, namely $(r,s) = (1,1),$ we can compute $\cS_1(1,1) = \{-1,1\}.$ It is clear that for every $k\ge 1,$ $k+1$ is relatively prime to $-1$ and $1.$ This implies that $\alpha= 1$ is a good shift for all $(1,1,k).$
\end{example}

\begin{example}\label{12-example}
Consider the case $(r, s) = (1,2).$ We compute $\cS_1(1,2) = \{-1,2\}$ and $\cS_2(1,2) = \{-2, 1, 4\}.$ We claim that $\alpha=1$ or $\alpha = 2$ is a good shift for each $(1,2,k).$ The only prime that divides any element of $\cS_1(1,2)$ or $\cS_2(1,2)$ is 2. Hence choosing $\alpha\in \{1,2\}$ such that $\alpha\equiv k+1\bmod 2$ yields a good shift for $(1,2,k).$
\end{example}

We will now see that finding a small good shift leads to a construction that implies a lower bound for $M(r,s,k).$ We first introduce a key lemma.

\begin{lemma}\label{shift-lemma}
Let $\alpha\ge 0$ be a good shift for $(r,s,k)$ and let $a=k+\alpha.$ Let $f:[0, a-1]\to \{-r,s\}$ be a function mapping residues modulo $a$ to $-r$ or $s$ such that $$f(j) = \begin{cases} -r & j\bmod a < \frac{sk}{r+s} - 1 \\ s & j\bmod a \ge \frac{sk}{r+s} - 1.\end{cases}$$ Then the following properties hold:
\begin{enumerate}
    \item The number of $-r$'s is $\frac{sk}{r+s} - 1$ and the number of $s$'s is $\frac{rk}{r+s} + 1 + \alpha.$
    
    \item Let $A\subeq \ZZ$ be an $a$-term arithmetic progression and let $S$ denote the multiset of residues of $A$ modulo $a.$ Then $$\sum_{j\in S} f(j)\notin \cS_\alpha(r,s),$$ where the sum is over all $\abs{A}$ residues of the multiset $S,$ not just distinct ones.
    
    \item Let $B\subeq \ZZ$ be a $k$-term arithmetic progression and let $T$ denote the multiset of residues of $B$ modulo $a.$ Then $$\sum_{j\in T}f(j)\neq 0,$$ where the sum is over all $\abs{B}$ residues of the multiset $T,$ not just distinct ones.
\end{enumerate}
\end{lemma}

\begin{proof}
\begin{enumerate}
    \item This is by definition.
    
    \item By Proposition \ref{proposition-residue} the distinct elements of $S$ form an arithmetic progression $A'\subeq [0, a-1],$ say with common difference $d.$
    
    If $\gcd(d,a)=1$ then $A' = [0, a-1].$ We note that
    \begin{align*}
        \sum_{j=0}^{a-1} f(j) &= -r\bigp{\frac{sk}{r+s}-1} + s\bigp{\frac{rk}{r+s} + 1 + \alpha}
        \\&= r + s + s\alpha,
    \end{align*}
    which is strictly greater than any element in $\cS_\alpha(r,s).$ Hence we are done.
    
    Otherwise we have $\gcd(d,a)\ge 2.$ Proposition \ref{proposition-residue} tells us that $\gcd(d,a)$ is the multiplicity of every element of $S.$ We conclude that the sum $$\sum_{j\in S} f(j)$$ is divisible by $\gcd(d,a)$ and hence by a prime number that divides $a.$ But since $\alpha$ is assumed to be a good shift for $(r,s,k),$ by definition $a$ is relatively prime to every element of $\cS_\alpha(r,s),$ hence $$\sum_{j\in S} f(j)\notin \cS_\alpha(r,s).$$
    
    \item Extend $B$ by $\alpha$ terms to create an $a$-term arithmetic progression $A$ in $\ZZ.$ Let $S$ denote the multiset of residues of $A$ modulo $a.$ The multiset $T$ has exactly $\alpha$ more elements than $S.$ Hence
    \begin{align*}
        \sum_{j\in S} f(j) - \sum_{j\in T} f(j) &\in \cS_\alpha(r,s).
    \end{align*}
    If we had $$\sum_{j\in T} f(j) = 0,$$ then this would imply that $$\sum_{j\in S}f(j)\in \cS_\alpha(r,s),$$ which is a contradiction because of (2).
\end{enumerate}
\end{proof}

\begin{theorem}\label{good-shift-theorem}
Let $r,$ $s,$ and $k$ be positive integers such that $r<s,$ $\gcd(r,s)=1,$ and $r+s$ divides $k.$ Let $\alpha$ be a good shift for $(r,s,k).$ Then
\begin{align*}
    M(r,s,k) &\ge \bigp{r(k+\alpha) + (r+s+s\alpha)}\cdot \floor{\frac{\frac{sk}{r+s}-1}{r(r+s+s\alpha)}}.
\end{align*}
\end{theorem}

\begin{proof}
Let $a = k+\alpha.$ Construct a function $f:[0, n-1]\to \{-r,s\}$ for $$n= \bigp{ra + (r+s+s\alpha)}\cdot \floor{\frac{\frac{sk}{r+s}-1}{r(r+s+s\alpha)}}$$ by defining $$f(j) = \begin{cases} -r & j\bmod a < \frac{sk}{r+s} - 1 \\ s & j\bmod a \ge \frac{sk}{r+s} - 1.\end{cases}$$

We first verify that the constructed sequence is zero-sum. Each of the first $$r\cdot \floor{\frac{\frac{sk}{r+s}-1}{r(r+s+s\alpha)}}$$ consecutive blocks of $a$ elements of the sequence has weight $r+s+s\alpha$ by a calculation in (2) of Lemma \ref{shift-lemma} noting that $d=1$ in our case. There are $$(r+s+s\alpha)\cdot \floor{\frac{\frac{sk}{r+s}-1}{r(r+s+s\alpha)}} \le \frac{sk}{r+s}-1$$ remaining terms that are all equal to $-r$ by Definition \ref{shift-definition}. Hence the total weight is $$r\cdot \floor{\frac{\frac{sk}{r+s}-1}{r(r+s+s\alpha)}}\cdot (r+s+s\alpha) - r(r+s+s\alpha)\cdot \floor{\frac{\frac{sk}{r+s}-1}{r(r+s+s\alpha)}} = 0.$$ We conclude that $f([0,n-1]) = 0.$

The fact that any $k$-term arithmetic progression $A\subeq [0,n-1]$ satisfies $f(A)\neq 0$ is a corollary of (3) in Lemma \ref{shift-lemma}.

By our construction, we conclude that $$M(r,s,k) \ge \bigp{r(k+\alpha) + (r+s+s\alpha)}\cdot \floor{\frac{\frac{sk}{r+s}-1}{r(r+s+s\alpha)}}.$$
\end{proof}

Recall that our construction in the proof of Theorem \ref{good-shift-theorem} relies on finding a good shift for $(r,s,k).$ The question of finding the smallest good shift $\alpha$ for $(r,s,k)$ now remains, which is a number theoretic question. If one could prove that for any fixed positive integers $r$ and $s$ there is a uniform upper bound on $\alpha$ for all $k$ then Theorem \ref{good-shift-theorem} would imply a quadratic lower bound for $M(r,s,k).$

\begin{remark}
Note that this construction is in fact a generalization of our construction in the proof of Theorem \ref{quadratic-theorem} for $\{-1,1\}$-sequences. Indeed, Example \ref{11-example} confirms that $\alpha=1$ is a good shift for all $(1,1,k).$
\end{remark}

Note that Example \ref{12-example} guarantees a good shift $\alpha$ for each $(1,2,k)$ that is at most 2. Theorem \ref{good-shift-theorem} thus implies the quadratic lower bound
\begin{align*}
    M(r,s,k) &\ge \bigp{r(k+\alpha) + (r+s+s\alpha)}\cdot \floor{\frac{\frac{sk}{r+s}-1}{r(r+s+s\alpha)}}
    \\&\ge \frac{2}{21}k^2 + O(k).
\end{align*}
for $(r,s) = (1,2).$ Combining this result with Theorem \ref{rs-theorem} for $(r,s) = (1,2)$ yields the following corollary:

\begin{corollary}
We have $$\frac{2}{21}k^2 + O(k) \le M(1,2,k) \le \frac{2}{9}k^2 + O(k)$$ for all $k\equiv 0\bmod 3.$
\end{corollary}

Unfortunately we do not know in general for fixed $r$ and $s$ whether or not a uniform upper bound on the minimum good shift $\alpha$ for all $(r,s,k)$ exists. However, using a well-known result by Baker, Harman, and Pintz \cite{BHP} that for sufficiently large $k$ there is a prime in the interval $(k, k+k^{0.525}],$ we can at least say that $M(r,s,k) = \omega(k),$ that is, $M(r,s,k)$ is superlinear in $k.$ We now give the proof of Theorem \ref{not-quadratic-theorem} from Section \ref{introduction}, which recall states that for positive integers $r,$ $s,$ and $k,$ such that $r+s$ divides $k,$ we have $$\Omega_{r,s}\bigp{k^{1.475}}\le M(r,s,k)\le \frac{rs}{(r+s)^2}k^2 + O(k)$$ for sufficiently large $k.$

\begin{proof}[Proof of Theorem \ref{not-quadratic-theorem}]
Fix arbitrary positive integers $r$ and $s$ such that $r<s$ and $\gcd(r,s) = 1.$ For sufficiently large $k,$ choose $\alpha \le k^{0.525}$ such that $k+\alpha$ is prime and such that $k + \alpha > s\alpha.$ The only prime factor of $k+\alpha$ is itself, which is greater than any element in $S_\alpha.$ We conclude that $\alpha$ is a good shift for $(r,s,k).$ The bound now becomes
\begin{align}\label{superlinear-equation}
    M(r,s,k) &\ge \bigp{r(k+\alpha) + (r+s+s\alpha)}\cdot \floor{\frac{\frac{sk}{r+s}-1}{r(r+s+s\alpha)}}
    \\&= \Omega_{r,s}\bigp{k\cdot \frac{k}{\alpha}}
    \\&= \Omega_{r,s}\bigp{k^{1.475}}
\end{align}
for sufficiently large $k.$

In the case that $r > s,$ a negation of the sequence followed by the same argument above shows that $M(r,s,k) = \Omega_{r,s}\bigp{k^{1.475}}$ as well.
\end{proof}

\begin{remark}
The statement $M(r,s,k) = \Omega_{r,s}\bigp{k^{1.475}}$ for sufficiently large $k$ is equivalent to the statement that there exists a constant $c_{r,s}>0$ such that $M(r,s,k) \ge c_{r,s}\cdot k^{1.475}$ for sufficiently large $k.$ We remark that the constant $c_{r,s}$ is effective and only depends on $r$ and $s.$ In particular, from \eqref{superlinear-equation} we have $$c_{r,s}\approx \frac{1}{r+s}.$$
\end{remark}

We now turn our focus to upper bounds on $M(r,s,k).$ Recall that we have the trivial bound $M(r,s,k)\le N(r,s,k)$ because a $k$-block is an arithmetic progression of length $k$ and common difference $1.$ A natural question is whether the upper bounds for $M(r,s,k)$ can be improved. It would be particularly interesting if one can prove $$M(r,s,k)\le c_2 k^2 + O(k)$$ for a constant $c_2 < \frac{rs}{(r+s)^2}$ for certain pairs $(r,s).$ Results such as these would imply that forbidding $k$-term arithmetic progressions decreases the number of terms in a zero-sum sequence by at least a constant ratio in these cases. That being said, at least in the case of $r=s=1$ the constant $c_2 = \frac{1}{4}$ in the upper bound $$M(1,1,k) \le \frac{1}{4}k^2 + O(k)$$ cannot be improved.

\begin{theorem}\label{11-tight}
There are infinitely many even $k$ for which $$M(1,1,k)\ge \frac{1}{4}k^2 + O(k).$$
\end{theorem}

\begin{proof}
We show by construction that $$M(1,1,k)\ge \frac{1}{4}k^2 -1$$ for all $k=2p,$ where $p$ is an odd prime. Construct a function $f:[0, n-1]\to \{-1,1\}$ for $$n = (2p+2)\bigp{\frac{p-1}{2}} = p^2 - 1.$$ by defining $$f(j) = \begin{cases} -1 & j\bmod k < p-1 \\ 1 & j\bmod k \ge p-1.\end{cases}$$

We see that the constructed sequence is zero-sum because in each of the first $\frac{p-1}{2}$ consecutive blocks of $k=2p$ elements there are two more $1$'s than $-1$'s, and the remainder block has $p-1$ $-1$'s, yielding a total weight of $$2\cdot \frac{p-1}{2}  - (p-1) = 0.$$

We now prove that any $k$-term arithmetic progression $A\subeq [0, n-1]$ with common difference $d$ satisfies $f(A)\neq 0.$ Restricting $f$ to a function $\ZZ/k\ZZ\to \{-1,1\},$ it suffices to check that $f(A')\neq 0$ for any arithmetic progression $A'\subeq [0, k-1]$ with $\frac{k}{\gcd(d,k)}$ terms and common difference $\gcd(d,k).$

We check all of the possible common differences $d$:
\begin{itemize}
    \item The case $d=1$ is handled by Lemma \ref{rs-construction}, which tells us that there is no zero-sum $k$-block.
    \item The case $d=p$ is impossible since in order for a $k$-term arithmetic subsequence to exist we would have to have $$n > (k-1)d = 2p^2 - p$$ terms, which is greater than $n = p^2 - 1.$
    \item Otherwise the number of terms in $A'$ is not divisible by 2, meaning that it is impossible for $A'$ to be zero-sum.
\end{itemize}
\end{proof}

\section{Open problems}\label{open}

We end with a summary of open problems that arise from our work.

\begin{problem}
A natural open question is whether the lower bounds for $M(r,s,k)$ can be improved. It is very unlikely that $\Omega_{r,s}(k^{1.475})$ is the best lower bound that one can prove, and recall that Caro et al.\! guess in Conjecture \ref{quadratic-conjecture} that the lower bound in terms of $k$ should be quadratic.
\end{problem}

\begin{problem}
Recall that Theorem \ref{11-tight} shows that the constant $c_2(r,s)$ is tight for $(r,s) = (1,1).$ Using the same argument it is not hard to show tightness for any $(r,s)$ such that $r+s > rs,$ namely $r=1$ or $s=1.$ But one can wonder whether $$M(r,s,k)\le c_2 k^2 + O(k)$$ for a constant $c_2 < \frac{rs}{(r+s)^2}$ for certain other pairs of $(r,s).$
\end{problem}

\begin{problem}
Determine, for all fixed positive integers $r$ and $s,$ whether or not there exists a constant $\ol{\alpha}(r,s)$ such that for each $k$ we can find a good shift $\alpha\le \ol{\alpha}(r,s)$ for $(r,s,k).$\footnote{Recall the definition of a good shift from Definition \ref{shift-definition}.} Note that this would then imply a quadratic lower bound for $M(r,s,k)$ for each pair of $r$ and $s.$
\end{problem}

\appendix
\section{}
The purpose of this section is to discuss the more natural construction of a $\{-1,1\}$-sequence that avoids a zero-sum $k$-term arithmetic subsequence in which the terms of the sequence only depend on the index of the term modulo $k.$ As mentioned in Section \ref{arithmetic}, this construction achieves a quadratic asymptotic bound that is suboptimal to the construction in Section \ref{arithmetic}.

The idea is to assign to each residue modulo $k$ the value $-1$ or $1$ such that there is no zero-sum arithmetic progression of the form $A'$ as described in Proposition \ref{proposition-residue}. As we shall see, to get the best lower bound on $M(1,1,k)$ one should always set $m=1$ in Lemma \ref{lemma-residue} below. Nevertheless, because the construction in the following lemma generalizes nicely to any positive integer $m$ and may be of independent interest, we describe it:

\begin{lemma}\label{lemma-residue}
Suppose that $k=2 a_1 a_2\cdots a_m$ for some $m\ge 1$ and integers $a_1,a_2,\ldots,a_m>1,$ and furthermore assume that $2,a_1,a_2,\ldots,a_m$ are pairwise relatively prime. Let $f:[0, k-1]\to \{-1,1\}$ be a function mapping residues modulo $k$ to $-1$ or $1$ with $f(j)$ computed as follows:
\begin{enumerate}
    \item Define $x\bmod y$ to be the integer $x'\in [0, y-1]$ for which $x\equiv x'\bmod y.$ For each $0\le i\le m,$ define $$r_i(j) = \begin{cases} -1 & j\bmod a_i < \frac{a_i-1}{2} \\ 1 & j\bmod a_i \ge \frac{a_i-1}{2}.\end{cases}$$
    \item Set $$f(j) = \prod_{1\le i\le m} r_i(j).$$
\end{enumerate}
Then the following properties hold:
\begin{enumerate}
    \item The number of $f(j)=1$ is $\frac{k}{2}+1$ and the number of $f(j)=-1$ is $\frac{k}{2} - 1.$
    \item Let $d$ be any positive integer dividing $k.$ For any arithmetic progression $A\subeq [0, k-1]$ with $\frac{k}{d}$ terms and common difference $d,$ we have $f(A)\neq 0.$
\end{enumerate}
\end{lemma}

\begin{proof}
\begin{enumerate}
    \item By the Chinese Remainder Theorem we have
    \begin{align*}
        \sum_{j\in \ZZ/k\ZZ} f(j)
        &= \sum_{j\in (\ZZ/2\ZZ) \times \prod_{1\le i\le m} (\ZZ/a_i \ZZ)} \bigp{\prod_{1\le i\le m} r_i(j)}
        \\&= \bigp{\sum_{j\in \ZZ/2\ZZ} 1} \bigb{\prod_{1\le i\le m} \bigp{\sum_{j\in \ZZ/a_i \ZZ} r_i(j)}}
        \\&= 2 \prod_{1\le i\le m} \bigp{\frac{a_i + 1}{2} - \frac{a_i - 1}{2}}
        \\&= 2.
    \end{align*}
    Hence the number of $1$'s minus the number of $-1$'s is 2. Combined with the fact that the sum of the number of $1$'s and the number of $-1$'s is $k,$ we deduce that the number of $1$'s is $\frac{k}{2}+1$ and the number of $-1$'s is $\frac{k}{2} - 1.$
    
    
    \item If $d$ is even then $\frac{k}{d}$ is odd. Since no $\frac{k}{d}$ numbers each of which is in $\{-1,1\}$ can sum to 0 we are done. Hence we can assume that $d$ is odd, meaning that half the residues in $A$ are even and half are odd. By the Chinese Remainder Theorem, every residue modulo $a_1a_2\cdots a_m$ corresponds to two residues modulo $k=2a_1a_2\cdots a_m.$ Because $f(j)$ does not depend on $j\bmod 2,$ we have
    \begin{equation}\label{fA}
        f(A) = 2\sum_{\substack{j\in A\\ j\equiv 0\bmod 2}} f(j).
    \end{equation}
    Note that there are an odd number of such $j$ satisfying $j\in A$ and $j\equiv 0\bmod 2$ since $k\equiv 2\bmod 4.$ We conclude that the sum in \eqref{fA} is nonzero, implying $f(A)\neq 0$ as well.
    
\end{enumerate}
\end{proof}

In the context of achieving the best lower bound on $M(1,1,k),$ it is always optimal to set $m=1$ in Lemma \ref{lemma-residue}.

\begin{corollary}\label{corollary-residue}
Suppose that $k$ is of the form $2a$ for some $a>1$ that is odd. Let $f:[0, k-1]\to \{-1,1\}$ be a function mapping residues modulo $k$ to $-1$ or $1$ such that $$f(j) = \begin{cases} 1 & j\bmod a < \frac{a-1}{2} \\ -1 & j\bmod a \ge \frac{a-1}{2}.\end{cases}$$
Then the following properties hold:
\begin{enumerate}
    \item The number of $f(j)=1$ is $\frac{k}{2}+1$ and the number of $f(j)=-1$ is $\frac{k}{2} - 1.$
    \item Let $d$ be any positive integer dividing $k.$ For any arithmetic progression $A\subeq [0, k-1]$ with $\frac{k}{d}$ terms and common difference $d,$ we have $f(A)\neq 0.$
    \item We have $f(j) = -1$ for $j=0,1,\ldots,\frac{a-3}{2},$ which is a total of $\frac{a-1}{2}$ numbers.
\end{enumerate}
\end{corollary}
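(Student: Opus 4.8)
The plan is to read off all three properties from the $m=1$ case of Lemma \ref{lemma-residue}, specialized to the single modulus $a_1 = a$. When $m = 1$ the product $\prod_{1\le i\le m} r_i(j)$ reduces to the single factor $r_1(j)$, and $r_1$ is exactly the function $f$ of the statement once the sign convention is fixed so that the residues in the initial half map to $-1$, which is what properties (1) and (3) require. Under this identification properties (1) and (2) are verbatim the two properties of Lemma \ref{lemma-residue}, so no new argument is needed: the lemma already yields $\frac{k}{2}+1$ values with $f(j)=1$ and $\frac{k}{2}-1$ with $f(j)=-1$, and it already yields $f(A)\neq 0$ for every arithmetic progression $A\subeq \{0,1,\ldots,k-1\}$ with $\frac{k}{d}$ terms and common difference $d$, for each divisor $d$ of $k$.

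Should a self-contained derivation be preferred, I would rerun the proof of Lemma \ref{lemma-residue} with the Chinese Remainder isomorphism $\ZZ/k\ZZ \cong (\ZZ/2\ZZ)\times(\ZZ/a\ZZ)$, using that $k = 2a$ with $a$ odd forces $k\equiv 2\bmod 4$. For property (1) the sum $\sum_{j\in \ZZ/k\ZZ} f(j)$ factors as $2\sum_{c\in \ZZ/a\ZZ} r_1(c) = 2\bigp{\frac{a+1}{2}-\frac{a-1}{2}} = 2$ since $f$ is independent of $j\bmod 2$, which together with the total count $k$ pins down the two cardinalities. For property (2) I would first discard even $d$ (then $\frac{k}{d}$ is odd, and a $\pm 1$ sum of odd length cannot vanish), and for odd $d$ use that $f(A) = 2\sum_{j\in A,\ j\equiv 0\bmod 2} f(j)$ with an odd number of even terms $j$, forcing $f(A)\neq 0$.

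The only genuinely new item is property (3), and it is immediate. For each $j\in \bigc{0,1,\ldots,\frac{a-3}{2}}$ one has $j\bmod a = j$ lying in the initial half of the residues modulo $a$, the set on which $f$ equals $-1$, and there are $\frac{a-3}{2}-0+1 = \frac{a-1}{2}$ such integers. I therefore do not expect any real obstacle: the substantive combinatorics is inherited wholesale from Lemma \ref{lemma-residue}, and the only point requiring care is bookkeeping the sign convention so that the initial $\frac{a-1}{2}$ residues receive $-1$, which is precisely the choice making properties (1) and (3) mutually consistent.
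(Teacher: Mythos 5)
Your proof is correct and follows the paper's own route exactly: the paper likewise disposes of properties (1) and (2) by citing Lemma \ref{lemma-residue} with $m=1$ and checks (3) directly from the construction. You are also right to flag the sign convention: as displayed, the corollary's case split assigns $+1$ to $j\bmod a<\frac{a-1}{2}$, under which its own properties (1) and (3) would fail; the intended function is the one with $-1$ on the initial half of the residues, matching $r_1$ in Lemma \ref{lemma-residue} and Definition \ref{definition-residue}, and your argument correctly works with that convention.
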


\begin{proof}
Properties (1) and (2) follow from Lemma \ref{lemma-residue}. Property (3) can be checked by definition of the construction.
\end{proof}

We now construct for $k\equiv 2\bmod 4$ a function $f:[n]\to \{-1,1\}$ that has no zero-sum subsequence indexed by a $k$-term arithmetic progression for $n$ growing quadratically in $k.$ For notational simplicity we construct a function $f:[0, n-1]\to \{-1,1\}$ instead, which is equivalent.

\begin{theorem}
We have $$M(1,1,k)\ge \frac{1}{8}k^2 + O(k)$$ for $k\equiv 2\bmod 4.$
\end{theorem}
\begin{proof}
Let $a>1$ be odd. Construct a function $f:[0, n-1]\to \{-1,1\}$ for $$n = (2a+2)\cdot \floor{\frac{a-1}{4}}$$ by defining $$f(j) = \begin{cases} -1 & j\bmod a < \frac{a-1}{2} \\ 1 & j\bmod a \ge \frac{a-1}{2}.\end{cases}$$

We first verify that our constructed sequence is zero-sum. By Corollary \ref{corollary-residue}, in each of the first $\floor{\frac{a-1}{4}}$ consecutive blocks of $k$ elements of the sequence, there are two more $1$'s than $-1$'s, so in total there are $2\floor{\frac{a-1}{4}}$ more $1$'s than $-1$'s in the first $2a\cdot \floor{\frac{a-1}{4}}$ terms of the sequence. By (3) of Corollary \ref{corollary-residue}, for $j\ge 2a\cdot \floor{\frac{a-1}{4}}$ we have $f(j) = -1.$ In other words, the remaining $2\cdot \floor{\frac{a-1}{4}}$ are all $-1$'s. We conclude that $f([0,n-1]) = 0.$

Now we verify that our constructed sequence has no zero-sum arithmetic progression subsequence.

\begin{claim}
Let $A\subeq [0, n-1]$ be any $k$-term arithmetic progression with common difference $d$. Then $f(A)\neq 0,$ and furthermore $\abs{f(A)} \ge \gcd(d,k).$
\end{claim}
To prove this claim, consider the multiset $S$ of the residues of $A$ modulo $k.$ By Proposition \ref{proposition-residue}, the distinct elements of $S$ form an arithmetic progression $A'\subeq [0, k-1]$ with $\frac{k}{\gcd(d,k)}$ terms and common difference $\gcd(d,k).$ Furthermore the multiplicity of every element of $S$ is $\gcd(d,k).$ Note that $f(j)$ depends only on the residue of $j$ modulo $k.$ Hence restricting $f$ to a function from $\ZZ/k\ZZ$ to $\{-1,1\},$ it suffices to show that $f(A')\neq 0.$ But this is the result of Lemma \ref{lemma-residue}. This completes the proof of the claim.

By our construction, we thus conclude that
\begin{align*}
    M(1,1,k) &\ge (2a+2)\cdot \floor{\frac{a-1}{4}}
    = (k+2)\cdot \floor{\frac{k-2}{8}},
\end{align*}
which implies $$M(1,1,k)\ge \frac{1}{8}k^2 + O(k)$$ in the case $k\equiv 2\bmod 4.$
\end{proof}

It is unlikely that any construction letting $f(j)$ depend on only the residue $j\bmod k$ according to Lemma \ref{lemma-residue} produces a quadratic lower bound for $n$ when $k$ is divisible by a large power of 2. The following proposition, which may be of independent interest, presents a natural barrier to using such a construction for all even $k$:

\begin{proposition}
Let $k=2^v$ be a power of 2 and let $f:[0, k-1]\to \{-1,1\}$ be any function. Suppose for any $2^{v-v'}$-term arithmetic progression $A$ with common difference $2^{v'}$ in $[0, k-1]$ we have $f(A)\neq 0.$ Then either $f(j) = -1$ for all $j$ or $f(j) = 1$ for all $j.$
\end{proposition}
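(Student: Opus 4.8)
The plan is to prove the contrapositive structure directly by induction on $v$. The key observation is that the hypothesis, which forbids zero-sum arithmetic progressions of the single length $2^{v-v'}$ for \emph{every} $0 \le v' \le v$, is extremely restrictive: it controls progressions at all dyadic scales simultaneously. The strongest constraint comes from taking $v' = v-1$, which forces that every $2$-term arithmetic progression with common difference $2^{v-1}$ is not zero-sum. There are exactly $2^{v-1}$ such progressions, namely the pairs $\{j, j + 2^{v-1}\}$ for $0 \le j < 2^{v-1}$, and each pair partitions $\{0,1,\ldots,k-1\}$. A $2$-term progression fails to be zero-sum precisely when its two entries have equal sign, so the condition says $f(j) = f(j + 2^{v-1})$ for all $j < 2^{v-1}$. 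In other words, $f$ descends to a well-defined function $\bar f$ on $\ZZ/2^{v-1}\ZZ$.

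The heart of the argument is then to show that $\bar f$ inherits the analogous avoidance hypothesis at one level down, so that induction applies. First I would check the base case $v = 0$, where $k = 1$ and the claim is vacuous (a single value is automatically constant), and perhaps $v = 1$ by hand. For the inductive step, I would take an arbitrary $2^{(v-1)-w}$-term arithmetic progression $A$ in $\{0,1,\ldots,2^{v-1}-1\}$ with common difference $2^{w}$ (for $0 \le w \le v-1$) and lift it to a progression in $\{0,1,\ldots,2^v-1\}$. The natural lift is to concatenate $A$ with its translate $A + 2^{v-1}$; since $f(j) = f(j+2^{v-1})$, we get $f(A) = \bar f(A)$ and the lifted set is a $2^{v-w}$-term arithmetic progression of common difference $2^{w}$ in the full range, whose weight under $f$ is twice $\bar f(A)$. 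By hypothesis this weight is nonzero, so $\bar f(A) \neq 0$. Thus $\bar f$ satisfies the hypothesis of the proposition for exponent $v-1$, and by induction $\bar f$ is constant, whence $f$ is constant.

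The main obstacle I anticipate is verifying cleanly that the lift of an arithmetic progression at level $v-1$ really is an arithmetic progression at level $v$ covering each lifted residue with the correct structure, rather than merely a union of two shifted copies. One must confirm that interleaving $A$ and $A + 2^{v-1}$ as \emph{residues modulo} $2^v$ yields a genuine arithmetic progression of length $2^{v-w}$ and common difference $2^w$; this is really the content of Proposition \ref{proposition-residue} specialized to powers of $2$, where $\gcd(2^w, 2^v) = 2^w$ and the multiplicities work out to exactly the doubling we exploit. A secondary subtlety is the parity bookkeeping: one should make sure that for even common differences the relevant progression length is even (so that a zero sum is a priori possible and the constraint is nonvacuous), which is automatic here since $2^{v-w}$ is even whenever $w < v$. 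Once these structural points are pinned down, the sign-equality reduction and the induction close the argument with no further computation.
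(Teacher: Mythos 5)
Your proof is correct, and it takes a genuinely different route from the paper's, even though both arguments pivot on the same first observation (the pairs $\{j, j+2^{v-1}\}$ must carry equal signs). The paper fixes $v$ and runs a downward induction on the scale $v'$, proving the stronger intermediate statement that $f$ is \emph{constant} on every $2^{v-v'}$-term progression of common difference $2^{v'}$: the inductive step splits such a progression into its two interleaved halves of common difference $2^{v'+1}$ (even-indexed and odd-indexed terms), each constant by hypothesis, and notes that opposite constants would force a zero sum. You instead induct on $v$ itself, using the $v'=v-1$ pairing to descend $f$ to a function $\bar f$ on $\{0,\ldots,2^{v-1}-1\}$ and then lifting each progression at level $v-1$ to one at level $v$ by concatenating $A$ with $A+2^{v-1}$ (which is a consecutive, not interleaved, decomposition); the weight doubles, so $\bar f$ inherits the avoidance hypothesis and the induction closes. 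The structural worry you flag resolves more easily than you suggest: no modular interleaving is needed, since the last term of $A$ plus $2^w$ is exactly the first term of $A+2^{v-1}$, so the union is literally an integer arithmetic progression of length $2^{v-w}$ and common difference $2^w$ contained in $\{0,\ldots,2^v-1\}$. Your version is a clean self-referential induction on the proposition; the paper's version yields the slightly stronger by-product that $f$ is constant on every dyadic progression, not merely on the whole domain.
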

\begin{proof}
We will prove by induction on $v'$ that the image of any $2^{v-v'}$-term arithmetic progression $A$ with common difference $2^{v'}$ in $[0, k-1]$ is either only $-1$ or only $1.$ Note that any such $A$ is of the form $j,j+2^{v'},\ldots,j+2^v-2^{v'}$ for $j\in [0, 2^{v'}-1].$

The induction will start from $v'=v-1$ and work downwards. In the case where $v' = v-1$ we know that since $$f(j) + f(j+2^{v-1}) \neq 0,$$ $f(j)$ and $f(j+2^{v-1})$ must have the same sign.

Now assume the inductive hypothesis is true for $v'+1.$ We want to show it is true for $v'.$ Consider an arithmetic progression $A$ of the form $j,j+2^{v'},\ldots,j+2^v-2^{v'}$ for $j\in [0, 2^{v'}-1].$ We note that $A$ is the disjoint union of two arithmetic subsequences $$A_1 = j,j+2^{v'+1},\ldots,j+2^v-2^{v'+1}$$ and $$A_2 = j+2^{v'}, j+2^{v'}+2^{v'+1},\ldots,j+2^{v'}+2^v-2^{v'+1},$$ both of which are of the form sufficient for the inductive hypothesis. Hence the image of $A_1$ and the image of $A_2$ under $f$ are both constant. If the images were different in sign, then since $A_1$ and $A_2$ have the same number of terms, this would imply that $f(A)=0,$ which contradicts the assumption. We conclude that in fact the image of $A$ is either only $-1$ or only $1,$ completing the inductive step.
\end{proof}


\begin{thebibliography}{99}

\bibitem{AMSSV}
C. Augspurger, M. Minter, K. Shoukry, P. Sissokho, K. Voss, \textit{Avoiding zero-sum subsequences of prescribed length over the integers}, Integers, 17 (2017), Article A18.

\bibitem{BHP}
R. Baker, G. Harman, J. Pintz, \textit{The difference between consecutive primes, II}, Proceedings of the London Mathematical Society. (3) 83 (2001), 532–-562.

\bibitem{B}
A. Berger, \textit{An analogue of the Erd\H{o}s-Ginzburg-Ziv Theorem over $\ZZ$}, Discrete Mathematics 342.3 (2019), 815--820. 

\bibitem{CHM}
Y. Caro, A. Hansberg, A. Montejano, \textit{Zero-sum subsequences in bounded-sum $\{-1,1\}$-sequences}, Journal of Combinatorial Theory, Series A 161 (2019), 387--419.

\bibitem{C2}
Y. Caro, \textit{Zero-sum problems -- A survey}, Discrete Math., 152 (1996), 93--113.

\bibitem{CY}
Y. Caro, R. Yuster, \textit{On zero-sum and almost zero-sum subgraphs over $\ZZ$}, Graphs Combin., 32 (1) (2016), 39--63.

\bibitem{C}
B. Chazelle, \textit{The Discrepancy Method: Randomness and Complexity}, Cambridge University Press (2000).

\bibitem{EGZ}
P. Erd\H{o}s, A. Ginzburg, A. Ziv, \textit{Theorem in the additive number theory}, Bull. Res. Counc. Isr., 10 (1961), 41--43.

\bibitem{GG}
W. Gao, A. Geroldinger, \textit{Zero-sum problems in finite abelian groups: A survey}, Expo. Math., 24 (2006), 337--369.

\bibitem{H}
A. Hildebrand, \textit{On consecutive values of the Liouville function}, Enseign. Math. (2), 32 (3–4) (1986), 219--226

\bibitem{MS}
J. Matusek, J. Spencer, \textit{Discrepancy in arithmetic progressions}, J. Amer. Math. Soc. 9 (1) (1996).

\bibitem{R}
K. F. Roth, \textit{Remark concerning integer sequences}, Acta Arith. 9 (1964), 257--260.

\bibitem{SST}
M. L. Sahs, P. A. Sissokho, J. N. Torf, \textit{A zero-sum theorem over $\ZZ$}, Integers, 13 (A70) (2013).

\bibitem{T}
T. Tao, \textit{The Erd\H{o}s discrepancy problem}, Discrete Anal. (2016), Paper No. 1, 29.

\end{thebibliography}
\end{document}